\def\classification#1{\def\@class{#1}}
\DeclareFontFamily{OT1}{rsfs}{}
\DeclareFontShape{OT1}{rsfs}{n}{it}{<-> rsfs10}{}
\DeclareMathAlphabet{\mathscr}{OT1}{rsfs}{n}{it}
\newcommand{\R}{{\mathbb R}}
\newcommand{\Ka}{\mathcal{K}}
\newcommand{\om}{\boldsymbol \omega}
\newcommand{\qu}{\boldsymbol q}
\newcommand{\ve}{\boldsymbol v}
\newcommand{\ur}{\boldsymbol u}
\newtheorem{theorem}{Theorem}
\newtheorem{lemma}{Lemma}
\theoremstyle{remark}
\title{On the use of Klein quadric for geometric incidence problems in two dimensions}
\author{Misha Rudnev}
\address{Misha Rudnev, Department of Mathematics, University of Bristol,
  Bristol BS8 1TW, United Kingdom}
\email{m.rudnev@bristol.ac.uk}
\author{J. M. Selig}
\address{J. M. Selig, Faculty of Business,
London South Bank University,
103 Borough Road,
London SE1 0AA, United Kingdom}
\email{seligjm@lsbu.ac.uk}
\subjclass[2000]{68R05,11B75}
\begin{document}
\begin{abstract} We discuss a unified approach to a class of geometric combinatorics incidence problems in two dimensions, of the Erd\H os distance type. The goal is obtaining the second moment estimate. That is, given a finite point set $S$ in $2D$, and a function $f$ on $S\times S$, find the upper bound for the number of solutions of the equation
\begin{equation}\label{gen}
 f(p,p') = f(q,q')\neq 0,\qquad (p,p',q,q')\in S\times S\times S\times S.
\end{equation}
E.g., $f$ is the Euclidean distance in the plane, sphere, or a sheet of the two-sheeted hyperboloid.

Our ultimate tool is the Guth-Katz incidence theorem for lines in $\mathbb{RP}^3$, but we focus on how the original problem in $2D$ gets reduced to its application. The corresponding procedure was initiated by Elekes and Sharir, based on symmetry considerations.  The point we make here is that symmetry considerations can be bypassed or made implicit.  The classical Pl\"ucker-Klein formalism for line geometry enables one to directly interpret a solution of \eqref{gen} as intersection of two lines in $\mathbb{RP}^3$. This allows for a very brief argument extending the Euclidean plane
distance argument to the spherical and hyperbolic distances. We also find many instances of the question \eqref{gen} without underlying symmetry group.

The space of lines in the projective three-space, the Klein quadric $\mathcal K$, is four-dimensional. Thus, we start out with an injective map $\mathfrak F:\,S\times S\to\mathcal K$, that is from a pair of points $(p,q)$ to a line $l_{pq}$ and seek a   corresponding combinatorial problem in the form (\ref{gen}) in two dimensions,  which can be solved by applying the Guth-Katz theorem to the set of lines $\{l_{pq}\}$ in $\mathbb{RP}^3$. 
 
We identify a few new such problems, and hence applications of the Guth-Katz
theorem and make generalisations of the existing ones. It is
the direct approach in question that
is the main purpose of this paper.
\end{abstract}

\maketitle

\vspace{-5mm}

\section{Introduction} In 2010 Guth and Katz, \cite{GK}, settled the long standing Erd\H os distance conjecture. They proved that a set $S$ of $N$ points in $\R^2$ determines $\Omega\left(\frac{N}{\log N}\right)$ distinct Euclidean distances between pairs of points in $S$.

Their proof has two key steps. The first one is to reduce the problem about distances in $2D$ to that of line-line incidences in $3D$. In order to do so, Guth and Katz used what since has become known as the ``Elekes-Sharir framework", presented in \cite{ES}, see also the references contained therein. Given two points $p,q\in S$, consider the set of rotations in the plane that map $p$ to $q$. If $p\neq q$, the centre of such a rotation lies on the bisector to $[pq]$, and the cotangent of the half-angle of rotation $\phi$ changes linearly as one moves along the bisector from the midpoint of $[pq]$. Hence, in the Euclidean coordinates $(x,y,z)$, where $(x,y)$ are the coordinates of the rotation centre and $z=\cot\phi$, the set of plane rotations that take $p=(p_1,p_2)$ to $q=(q_1,q_2)$ is given by a line with the equation
\begin{equation}\label{esline}
l_{pq}:\qquad (x,y,z)(t) = \left( \frac{p_1+q_1}{2}, \frac{p_2+q_2}{2},0\right) + t \left(  \frac{q_2-p_2}{2}, \frac{p_1-q_1}{2},1 \right).
\end{equation}
Besides the translation from $p$ to $q$ (which is irrelevant for the ensuing incidence count at the next step) can be associated with the point at infinity on this line, embedded in the projective space $\mathbb{RP}^3$. It follows that for $p,q,p',q'\in S$,

\begin{equation}\label{intrs}
\|p-p'\|=\|q-q'\|\qquad\Leftrightarrow\qquad l_{pq}\cap l_{p'q'}\neq \emptyset.
\end{equation}

The second key step was a new incidence theorem on line-line intersections in $\R^3$.
\begin{theorem}\label{GKT} Consider a set of $N^2$ lines in $\R^3$, such that

(i)
 no more than $O(N)$ lines are concurrent,

(ii)
 no more than $O(N)$ lines are co-planar,

(iii) no more than $O(N)$ lines lie in a regulus.\footnote{We adhere in this note to the standard terminology in line geometry texts, where
 the term {\em regulus} is used for a single ruling of a doubly-ruled surface.}

Then the number of pairs of intersecting lines is $O\left({N^3}\log N\right).$
\end{theorem}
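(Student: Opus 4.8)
The plan is to follow the polynomial-method argument of Guth and Katz from \cite{GK} --- of which this is exactly the conclusion --- so I only sketch its shape. Write $L=N^2$ for the number of lines, and for an integer $k\ge 2$ let $P_{\ge k}$ denote the number of points of $\R^3$ lying on at least $k$ of the lines. Since two distinct lines meet in at most one point, the number of intersecting pairs is $\sum_p\binom{n(p)}{2}$, where $n(p)$ is the number of lines through $p$ and the sum runs over the finitely many $p$ with $n(p)\ge 2$; Abel summation rewrites this as $\sum_{k\ge 2}(k-1)\,P_{\ge k}$, and hypothesis (i) forces $n(p)=O(N)$ for every $p$, so the sum terminates at $k=O(N)$. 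Hence the theorem reduces to the two rich-points estimates
\[
 P_{\ge 2}=O\!\left(L^{3/2}\right),\qquad\quad P_{\ge k}=O\!\left(L^{3/2}k^{-2}\right)\ \ \text{for } 3\le k\le cN,
\]
since the first of these contributes $O(N^3)$ and the second contributes $O\!\left(\sum_{k=3}^{cN}(k-1)L^{3/2}k^{-2}\right)=O\!\left(L^{3/2}\log L\right)=O(N^3\log N)$; the logarithm is born in this last summation.

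Both displayed estimates come from essentially the same polynomial-method argument, the second carrying the parameter $k$ throughout, so I describe it. A first phase of iterated polynomial partitioning reduces matters to the case where all but an acceptable number $O(L^{3/2}k^{-2})$ of the $k$-rich points lie on an algebraic surface $Z$ whose degree $D$ has been pushed down to order $L^{1/2}/k$; the mechanism is cell counting in the complements of the successive partitioning surfaces, together with the observation that a $k$-rich point lying on more lines of a surface than the surface's degree must be a critical or flat point of that surface. It then remains to bound the $k$-rich points lying on $Z$. A point of $Z$ through which at most two lines of $Z$ pass has at least $k-2$ of its lines off $Z$, and since each line off $Z$ meets $Z$ in at most $D$ points, with $D$ of order $L^{1/2}/k$ the number of such points is $O(LD/k)=O(L^{3/2}k^{-2})$. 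A point of $Z$ through which at least three lines of $Z$ pass is dealt with through the geometry of $Z$: decompose $Z$ into irreducible components and classify each component of degree $d$ by means of Salmon's flecnode polynomial, which has degree $11d-24$ and whose zero set contains the component exactly when the component is ruled (Cayley--Salmon). A non-ruled component of degree $d$ carries only $O(d^2)$ lines (Segre), so such components contribute negligibly; a planar component is controlled by hypothesis (ii); and a non-planar ruled component --- a doubly-ruled quadric, i.e.\ a regulus, being the extreme case --- is controlled by hypothesis (iii) together with the facts that a line off a surface of degree $d$ meets it in at most $d$ points and that the lines lying on such a component are essentially pairwise skew unless they are concurrent (hypothesis (i)) or coplanar (hypothesis (ii)). Summing over the components of $Z$ finishes the bound, and $P_{\ge 2}=O(L^{3/2})$ is obtained in the same way.

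The principal obstacle is exactly this last, surface, step: bounding how many of the lines can lie on the partitioning surface $Z$ and how many incidences they can sustain within it. This is what forces genuine input from the classical algebraic geometry of ruled surfaces --- the flecnode polynomial, and the bound on the number of lines in a non-ruled surface --- and it is here, rather than in any naive general-position assumption, that hypotheses (ii) and (iii) become forced on the problem. The regulus hypothesis (iii) in particular is indispensable: a doubly-ruled quadric is the only non-planar surface able to carry as many pairwise-incident lines as a plane, so without it the theorem would simply be false. Everything else --- the Abel summation of the first paragraph, the cell counting, and the choice of the partitioning degrees --- is comparatively soft.
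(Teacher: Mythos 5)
Your outline is, in substance, the Guth--Katz argument itself, and there is nothing to compare it against inside the paper: Theorem \ref{GKT} is not proved here at all, it is quoted verbatim from \cite{GK} and used later as a black box (together with its mild variant, Theorem \ref{GKTP}). As a sketch of the external proof it is faithful: the reduction of the pair count to $\sum_{k\ge 2}(k-1)P_{\ge k}$, the role of hypothesis (i) in truncating the sum at $k=O(N)$, the two rich-point estimates $P_{\ge 2}=O(L^{3/2})$ and $P_{\ge k}=O(L^{3/2}k^{-2})$, and the emergence of the $\log N$ from the harmonic sum are all exactly as in \cite{GK}. One mild inaccuracy: the two estimates are not really ``the same argument with a parameter''. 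In Guth--Katz the bound for $k\ge 3$ needs only the coplanarity hypothesis and is proved by polynomial partitioning at degree of order $L^{1/2}/k$ plus the critical/flat point dichotomy, with no flecnode input; the Cayley--Salmon/flecnode/ruled-surface machinery (and with it the regulus hypothesis (iii)) enters only in the $k=2$ estimate, which is run at degree of order $L^{1/2}$. You have merged the two, attributing the flecnode step to the generic argument; at sketch level this is harmless, but a full write-up would have to keep the two cases separate, and of course would have to supply the substantial details (cell counting, degree reduction, Segre's bound on lines in a non-ruled surface) that your proposal only names. Since the present paper's contribution is precisely everything \emph{before} this theorem is invoked, reproducing its proof is not needed here.
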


Once the conditions of Theorem \ref{GKT} have been checked to be satisfied, one gets the ``second moment'' upper estimate $O\left({N^3}\log N\right)$ on the number of pairs of congruent line segments with endpoints in $S$, cf. \eqref{intrs}. The lower bound on the cardinality of the distance set $\Delta(S)$, i.e., number of classes of segments by congruence, follows by the Cauchy-Schwarz inequality:

\begin{equation}\label{csuse} |\Delta(S)| \geq \frac{N^4}{O(N^3\log N)} = \Omega\left(\frac{N}{\log N}\right).\end{equation}

As usual, we use the notation $|\cdot|$ for cardinalities of finite sets. Symbols $\ll$, $\gg,$ suppress absolute constants in inequalities, as well as respectively do the symbols $O$ and $\Omega$. Besides,  $X=\Theta(Y)$ means that $X=O(Y)$ and $X=\Omega(Y)$. The symbols $C$ and $c$ stand for absolute constants, which may change from line to line.

\medskip
A reasonable question appears to be what other two-dimensional geometric combinatorics problems can be treated in terms of Theorem \ref{GKT}. Tao in his blog\footnote{See {\sf terrytao.wordpress.com/2011/03/05/lines-in-the-euclidean-group-se2/.}} stresses the universality of the Elekes-Sharir framework and describes it in the case when $S$ is the point set on the two-sphere $\mathbb S^2$, rather than $\R^2$. In the latter case, he argues that the set of isometries of $\mathbb S^2$ mapping a point $p$ to a point $q$  can be represented by  a great circle on the three-sphere $\mathbb S^3$, which doubly covers the  symmetry  group $SO(3)$. This can be seen by using quaternions. Furthermore, great circles project through the centre of $\mathbb S^3$ as lines in $\R^3$, which can be expected to satisfy the conditions of Theorem \ref{GKT}.  More generally, one can use for the same purpose the Clifford algebra representation of $SO(3)$ itself -- and we spell this out explicitly in the Appendix for comparison with the direct haiku (meaning that it virtually takes three lines) approach in the main body of the paper. Tao also states that in the case of constant negative curvature, that is the hyperbolic plane $\mathbb H^2$ replacing $\mathbb S^2$, the situation must be essentially the same, and in particular one can pass from both corresponding isometry groups $SO(3)$ and  $SL(2),$ to the Euclidean one  $SE(2)$ via the limiting process known as Saletan reduction.

Having felt that there is a certain gap between a blog post and a complete proof, we have decided to furnish one. We do it in essentially three  lines, and without the symmetry argument. 

\medskip 
We then move on to other combinatorial problems in $\R^2$ which can be shown to be amenable to an application of the Guth-Katz theorem.
Roche-Newton and the first author analysed the case of the Minkowski metric in \cite{RR} and found out that owing to the fact that the distance form is sign-indefinite, the hypothesis (ii) of Theorem \ref{GKT} generally gets violated\footnote{In fact, we show below that both hypotheses (i) and (ii) get violated. However, the former hypothesis is violated only at points lying in two planes, which were excluded from the three-space in \cite{RR} by the choice of parameterisation.}. But every line-line incidence inside a plane where the hypothesis was violated was shown to correspond to a zero Minkowski distance. Those could be discounted, once a combinatorial argument to weed the corresponding line intersections in ``rich planes'' out had been developed. This added the symmetry group $SE(1,1)$ to the list of applications of the Elekes-Sharir/Guth-Katz approach.

The incidence estimate of Theorem \ref{GKT} is sharp. Moreover, since the space of lines in $\mathbb{RP}^3$ is four-dimensional, and there are four independent parameters in say \eqref{esline}, the family of lines $\{l_{pq}\}$ arising via the Elekes-Sharir framework can indeed yield an extremal incidence configuration, with the number of lines' pair-wise intersections being $\Theta(N^3\log N)$. This can happen at least in the two cases that have been work out in detail, $SE(2)$ and $SE(1,1)$. What follows upon the application of the Cauchy-Schwarz inequality, cf. \eqref{csuse}, is a different matter, beyond the resolution power of the second moment estimate. E.g., in the case of the Euclidean distance, the omnipresent sharpness example when $S$ is a truncated integer lattice suggests that the ultimate lower bound for the number of distinct distances should be $|\Delta(S)|=\Omega\left(\frac{N}{\sqrt{\log N}}\right)$, a fraction of $\log N$ power better than \eqref{csuse}. In the Minkowski distance case the same example yields $\Theta\left(\frac{N}{(\log N)^\delta ( \log\log N )^{3/2}} \right)$ distinct distances, with $\delta = 0.086071\ldots,$ see \cite{Fo}.
Once again, this is not quite  $|\Delta(S)|=\Omega\left(\frac{N}{\log N}\right)$, as proved in \cite{RR}. On $\mathbb S^2$, there may be no point configurations yielding fewer than $|\Delta(S)|=\Omega({N})$ distances, but we would hesitate to suggest that there are none yielding the logarithmic factor in the second moment estimate. Perhaps,  the explicit expressions \eqref{rpz} for the lines $\{l_{pq}\}$ we provide for the spherical case be useful to furnish a construction of a point set on $\mathbb S^2$ with the extreme value for the second moment if such an example exists.

Whether or not there are point configurations in the $2D$ hyperbolic model $\mathbb H^2$, yielding fewer than 
$\Omega({N})$ distinct hyperbolic distances, appears to be an interesting question, to which we do not know the answer. But in any case, the second moment approach, i.e., counting congruent geodesic segments with endpoints in $S$ is hardly sharp enough to tackle the endpoint issue as to the true minimum number of distinct distances, for one is at the mercy of the application of the Cauchy-Schwarz inequality, \`a-la \eqref{csuse}.

\medskip
All the listed applications of Theorem \ref{GKT} began with the same initial step: symmetry considerations within the Elekes-Sharir framework. In this note we aim to somewhat turn things around and bypass symmetry considerations. We show that one can map directly a point pair $(p,q)\in S\times S$ to a Pl\"ucker vector in the Klein quadric $\mathcal K$, that is the space of lines in $\mathbb{FP}^3$. Thus our main point is simplification of the procedure, which arguably makes it more flexible. We anticipate this to be even more so if one deals with largely open Erd\H os type geometric combinatorics problems in three, rather than two dimensions, in which case the $4D$ "phase space", the Klein quadric in $\mathbb{FP}^5$, gets naturally replaced by the  Study quadric in $\mathbb{FP}^7$ and more generally by a Grassmann manifold..

The field $\mathbb{F}$ for the time being is $\R$, so far as no full extension of Theorem \ref{GKT} to other fields has been established. Still, we often proceed as long as we can  with a general $\mathbb{F}$, since the projective quadric formalism works in a broader context.

\section{Main results}
We re-state the claim that the main point of this note is not so much the novelty of results, but universality and transparency of the method. Our first theorem is the extension of the Guth-Katz Erd\H os distance claim to constant curvature metrics in $2D$.

\begin{theorem}\label{sh} Let $S$ be a set of $N$ points on a $\mathbb S^2$ or $\mathbb H^2$. Then the number of distinct distances between pairs of points of $S$ is $\Omega\left(\frac{N}{\log N}\right)$. \end{theorem}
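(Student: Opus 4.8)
The plan is to run the Elekes-Sharir/Guth-Katz scheme, but to replace the bisector-of-rotations line by an explicit Pl\"ucker vector and then to check the hypotheses of Theorem~\ref{GKT} for the resulting family of lines. Realise $\mathbb S^2$ as the unit sphere of $\R^3$ and $\mathbb H^2$ as one sheet of $\{x\in\R^3:\langle x,x\rangle=-1\}$, where from now on $\langle x,y\rangle$ is the standard inner product in the spherical case and the form $x_1y_1+x_2y_2-x_3y_3$ in the hyperbolic one; write $J=\mathrm{diag}(1,1,-1)$, so that $\langle x,y\rangle=x\cdot Jy$ with $\cdot$ the standard dot product ($J=I$ spherically). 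In either model the geodesic distance is a strictly monotone function of $\langle p,q\rangle$, so for $d(p,p'),d(q,q')\neq0$ the segments $[p,p']$ and $[q,q']$ are congruent exactly when $\langle p,p'\rangle=\langle q,q'\rangle$. To an ordered pair $(p,q)$ I attach the line $l_{pq}\subset\mathbb{RP}^3$ with Pl\"ucker vector
\[
\om_{pq}=\big(\,p+q\,;\ J(p-q)\,\big)\in\R^3\oplus\R^3 .
\]
Since $\langle p,p\rangle=\langle q,q\rangle$, the Klein quadratic form $(\mathbf d;\mathbf m)\mapsto\mathbf d\cdot\mathbf m$ annihilates $\om_{pq}$, and $\om_{pq}\neq 0$ because $(p+q,p-q)\neq(0,0)$; hence $l_{pq}$ is a genuine line of $\mathbb{RP}^3$, and $(p,q)\mapsto l_{pq}$ is injective up to the simultaneous antipode $(p,q)\mapsto(-p,-q)$, so at most $2$-to-$1$.

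The core of the matter is a single line of algebra: pairing $\om_{pq}$ with $\om_{p'q'}$ via the Klein bilinear form $\big((\mathbf d;\mathbf m),(\mathbf d';\mathbf m')\big)\mapsto\mathbf d\cdot\mathbf m'+\mathbf d'\cdot\mathbf m$ and cancelling the antisymmetric cross terms gives
\[
\langle\om_{pq},\om_{p'q'}\rangle_{\mathrm K}=2\big(\langle p,p'\rangle-\langle q,q'\rangle\big),
\]
so $l_{pq}\cap l_{p'q'}\neq\emptyset$ if and only if $d(p,p')=d(q,q')$. Consequently the number of incident pairs among the $N^2$ lines $\{l_{pq}:p,q\in S\}$ equals, up to the bounded multiplicity above and an $O(N^2)$ term from the degenerate quadruples with $p=p'$ or $q=q'$, the quantity $\sum_\delta n_\delta^2$ appearing in \eqref{csuse}, where $n_\delta$ counts ordered pairs of points of $S$ at distance $\delta\neq0$.

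It remains to verify the three hypotheses of Theorem~\ref{GKT} for this $\Theta(N^2)$-element family of lines. For (i): if $k$ of the lines pass through a point $x=[x_0:\mathbf x]$ of $\mathbb{RP}^3$, the incidence relation $\mathbf x\times(p+q)=x_0\,J(p-q)$ determines $q$ as one of at most two explicit functions of $p$ (a Cayley transform of $x$, and a reflection when $x_0=0$), so $k\le 2N$. For (ii): $k$ coplanar lines meet pairwise, so by the incidence identity the Gram matrices of $(p_1,\dots,p_k)$ and of $(q_1,\dots,q_k)$ coincide; by Witt's theorem a single element of $\mathrm O(3)$, resp.\ $\mathrm O(2,1)$, carries $p_i\mapsto q_i$ for every $i$, so again $q_i$ is a fixed function of $p_i$ and $k\le N$. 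For (iii): if $k$ lines lie in one ruling $R$ of a quadric, their Pl\"ucker vectors lie in the three-dimensional linear span of $R$ --- which is not contained in the Klein quadric, $R$ being a genuine regulus; pulling this back through the linear isomorphism $(p,q)\mapsto(p+q,J(p-q))$ confines $(p_i,q_i)$ to a fixed three-dimensional subspace $U\subset\R^3\oplus\R^3$, and $U\cap(\mathbb S^2\times\mathbb S^2)$, resp.\ $U\cap(\mathbb H^2\times\mathbb H^2)$, is a one-dimensional algebraic set of bounded degree --- or, in the degenerate two-dimensional case, a graph on which $q$ is a fixed function of $p$ --- so $k=O(N)$. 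Theorem~\ref{GKT} now yields $\sum_\delta n_\delta^2=O(N^3\log N)$, and Cauchy-Schwarz exactly as in \eqref{csuse} gives $|\Delta(S)|=\Omega(N/\log N)$, uniformly for $\mathbb S^2$ and $\mathbb H^2$.

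The Pl\"ucker vector and the incidence identity genuinely amount to three lines of algebra, and they put the spherical and hyperbolic cases on the same footing as the Euclidean one. I expect the real work to be concentrated in step (iii), excluding reguli rich in lines of the family, together with the bookkeeping in (i)--(ii) forced by the fact that $S$ need not be in general position: exactly when many points of $S$ lie on a common great circle, resp.\ geodesic, one must make sure that the ``$q_i$ a fixed function of $p_i$'', resp.\ ``$(p_i,q_i)$ on a fixed curve'', alternative persists, and that the antipodal lines $l_{p,-p}$ --- all lying in the plane at infinity, but at most $N$ of them --- hide no over-count.
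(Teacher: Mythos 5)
Your core reduction is exactly the paper's: the Pl\"ucker vector $(\boldsymbol p+\boldsymbol q:J(\boldsymbol p-\boldsymbol q))$, with $J=I$ on $\mathbb S^2$ and $J=\mathrm{diag}(1,1,-1)$ on the hyperboloid, is \eqref{rpz}, resp.\ \eqref{rpzh}, and your one-line reciprocal-product identity is \eqref{prz1}; the Cauchy--Schwarz finish is \eqref{csuse}. Where you genuinely diverge is in checking the hypotheses of Theorem \ref{GKT}. Condition (i) you verify just as the paper does (your Cayley transform is the paper's $(U-I)\boldsymbol p=-(U+I)\boldsymbol q$, resp.\ the $D\boldsymbol p-\boldsymbol u\times\boldsymbol p=D\boldsymbol q+\boldsymbol u\times\boldsymbol q$ computation). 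For (ii) the paper redoes the linear algebra for $\beta$-planes ($\om=\ur\times\ve$), while you use coinciding Gram matrices plus Witt's theorem; this is correct (the span of the $q_i$ contains a vector of nonzero norm, so the partial isometry is well defined and extends), it only uses pairwise intersection so it would cover (i) as well, and it is arguably slicker --- though it quietly re-imports the isometry group that the paper's ``direct'' approach was advertised as bypassing. For (iii) the paper's Lemma \ref{reguli} argues inside the Klein quadric (the regulus conic meets the fixed-$p$ linear congruence in at most two points unless contained in it), whereas you pull the $2$-plane spanned by the regulus back through the linear isomorphism $(p,q)\mapsto(p+q,J(p-q))$ to a $3$-dimensional subspace $U\subset\R^3\oplus\R^3$ and count dimensions. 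That route does work, but it is the one loose joint in your write-up: passing from ``the $(p_i,q_i)$ lie on a bounded-degree curve'' to $k=O(N)$ needs the fibre argument --- for each $p\in S$ the admissible $q$ form the affine slice $\{q:(p,q)\in U\}$, which meets $\mathbb S^2$, resp.\ the two-sheeted hyperboloid, in $O(1)$ points because neither surface contains a line, unless $\dim\bigl(U\cap(\{0\}\oplus\R^3)\bigr)\ge 2$, in which case the projection of $U$ to the $p$-factor is at most one-dimensional and only $O(1)$ values of $p$ occur --- and your blanket description of the two-dimensional degenerate case as ``a graph'' is not exhaustive (all $p_i$ may be pinned to $\pm p_0$, say), although the $O(N)$ bound survives in every case. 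With those few lines added, your proof is complete and gives the same bound as the paper's.
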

Once again, our motive, as to Theorem \ref{sh}, formulated in the above-mentioned blog by T. Tao  is to provide a very short proof, bypassing the symmetry argument.

\medskip
It turns out that our viewpoint enables one to identify several types of combinatorial problems where, once the problems are over the reals, the Guth-Katz theorem may be used.  Our next theorem  applies to metric problems and summarises/generalises the Euclidean and Minkowski distance cases as follows.

\begin{theorem}\label{metr} Let $S\subset \R^2$ have $N$ elements and $M_1$, $M_2$ be non-degenerate quadratic forms of the same signature. Then the number of solutions of the equation
\begin{equation}
M_1(p-p') = M_2(q-q')\neq 0,\qquad (p,p',q,q')\in S\times S\times S \times S
\label{dists}\end{equation}
is $O(N^3\log N)$.\end{theorem}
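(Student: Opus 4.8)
The plan is to reduce equation \eqref{dists} by linear algebra to the two model cases --- the Euclidean and the Minkowski plane distances --- and then to run the Klein-quadric argument of this paper in those cases.

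The first step is a reduction. Since $M_1$ and $M_2$ are non-degenerate and of the same signature, they are equivalent over $\R$: there is $T\in\GL_2(\R)$ with $M_2=M_1\circ T$. (In the exceptional reading where ``the same signature'' is meant to permit two opposite-definite forms, equation \eqref{dists} has no solutions at all and there is nothing to prove.) Putting $r=Tq$ and $r'=Tq'$ turns \eqref{dists} into $M_1(p-p')=M_1(r-r')\neq 0$ with $(p,p')\in S\times S$ and $(r,r')\in (TS)\times(TS)$. A further linear change $U\in\GL_2(\R)$ brings $M_1$ to one of the standard forms $M_0(x)=x_1^2+x_2^2$ or $M_0(x)=x_1^2-x_2^2$; applying $U^{-1}$ to every point and setting $S'=U^{-1}(S\cup TS)$, which has at most $2N$ points, we find that the number of solutions of \eqref{dists} is at most the number of quadruples $(a,b,c,d)\in S'\times S'\times S'\times S'$ with $M_0(a-b)=M_0(c-d)\neq 0$. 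It is thus enough to bound the latter by $O(N^3\log N)$ for a single point set $S'$ of size $O(N)$ and one of the two standard forms $M_0$.

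Here one follows the scheme recalled in the introduction. Mimicking \eqref{esline}, one writes an explicit injective map $\mathfrak F\colon S'\times S'\to\mathcal K$, $(p,q)\mapsto l_{pq}$, and checks by a short computation that the Klein bilinear form pairing $\mathfrak F(p,q)$ with $\mathfrak F(p',q')$ equals a fixed non-zero multiple of $M_0(p-p')-M_0(q-q')$. In particular each $\mathfrak F(p,q)$ lies on $\mathcal K$, and $l_{pq}\cap l_{p'q'}\neq\emptyset$ in $\mathbb{RP}^3$ precisely when $M_0(p-p')=M_0(q-q')$, just as in \eqref{intrs}. For $M_0$ definite, $\mathfrak F$ is the Elekes-Sharir assignment \eqref{esline} with $p,q$ now ranging over $S'$; for $M_0$ indefinite it is the $SE(1,1)$ analogue studied in \cite{RR}. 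Applying Theorem \ref{GKT} to the family of $|S'|^2=O(N^2)$ lines $\{l_{pq}\}$ then gives $O(N^3\log N)$ intersecting pairs, hence $O(N^3\log N)$ solutions of \eqref{dists}.

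The only genuine obstacle I anticipate is the verification of hypotheses (i)--(iii) of Theorem \ref{GKT} for the family $\{l_{pq}\}$. In the definite case this is the analysis of the lines \eqref{esline} carried out in \cite{GK}. In the indefinite case hypotheses (i) and (ii) genuinely fail --- many of the lines $l_{pq}$ lie in two exceptional planes --- but, as observed in \cite{RR}, every line-line incidence occurring inside such a plane forces $M_0(p-p')=M_0(q-q')=0$, and those quadruples are excluded from \eqref{dists}; one then discards the two planes and bounds the remaining incidences by the ``rich planes'' counting argument of \cite{RR}. The linear reduction and the Klein-form identity are otherwise routine.
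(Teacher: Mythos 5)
Your argument is correct, but it takes a genuinely different route from the paper. You reduce the general statement, via the equivalence $M_2=M_1\circ T$ and a further change of coordinates, to the two standard forms on a set $S'$ of size $O(N)$, and then quote the known second-moment bounds: \cite{GK} for the definite case and \cite{RR} (nonzero Minkowski distances, with the rich-plane weeding) for the signature $(1,1)$ case; the injectivity of $(p,p',q,q')\mapsto(Vp,Vp',VTq,VTq')$ makes the counting reduction sound, and the loss from $N$ to $2N$ points is harmless. The paper instead handles arbitrary $M_1\neq M_2$ directly: it writes every pair of forms of a given signature as $A^TA$ (resp.\ $A^T\,\mathrm{diag}(1,-1)\,A$) through the choices \eqref{eset}, \eqref{mset} in the general Klein-quadric map \eqref{map}, so that the reciprocal-product condition literally becomes $M_1(p-p')=M_2(q-q')$, and then verifies the Guth--Katz hypotheses for these general families (Lemmas \ref{reguli}, \ref{lemerd}, \ref{lemmink}). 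In the indefinite case the paper does not re-run the rich-plane argument of \cite{RR}: Lemma \ref{lemmink} shows all exceptional concurrencies/coplanarities force both quadratic values to vanish, and the zero values are then excluded by pigeonholing $S$ into $O(1)$ pairs of positive-proportion subsets whose cross-differences avoid the isotropic directions of Lemma \ref{iso}, after which the bipartite Theorem \ref{GKTP} applies. Your reduction buys brevity and reliance on published results; the paper's direct construction buys uniformity (no normalization of the forms, and the same map \eqref{map} also yields Theorems \ref{degmetr}--\ref{dirG}) and a cleaner disposal of the zero-distance incidences via \ref{GKTP} rather than the combinatorial plane-weeding. One small inaccuracy on your side, which does not affect the argument since you invoke \cite{RR} as a black box: in the indefinite case the coplanarity condition (ii) fails not just in two planes but in a whole family of planes (in the paper's coordinates, the planes $\ur\cdot\qu=-1$ with $u_1=\pm u_2$), while it is condition (i) that fails only at points of two planes.
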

The immediate corollary, in the case of $M_1=M_2=M$, cf. \eqref{csuse}, is the lower bound  $\Omega\left(\frac{N}{\log N}\right)$ on the number of values of
$$M(p-p') \equiv (p-p')^TM(p-p'),$$ unless they are all zero. In the future, we identify the notation for a quadratic form with that for its matrix.

\medskip
The next problem we consider appears to be new.
\begin{theorem}\label{degmetr} Let $S\subset \R^2$ have $N$ elements. Let $(a,c)$, $(\beta,\delta)$ be two pairs of fixed non-collinear - within each pair -- vectors in $\R^2$. Then the number of solutions of the equation
\begin{equation}
(p-p')^T a c^T (p-p') = (q-q')^T \beta \delta^T (q-q') \neq 0,\qquad (p,p',q,q')\in S\times S\times S \times S
\label{degdists}\end{equation}
is $O(N^3\log N)$.\end{theorem}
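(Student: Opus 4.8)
The plan is to reduce \eqref{degdists} to Theorem \ref{metr}. The left-hand side of \eqref{degdists} is a scalar, hence equals its own transpose, so
\[
(p-p')^T a c^T (p-p')=(p-p')^T M_1 (p-p'),\qquad M_1:=\tfrac12\bigl(ac^T+ca^T\bigr),
\]
and a short determinant computation gives $\det M_1=-\tfrac14(a_1c_2-a_2c_1)^2$ (writing $a=(a_1,a_2)$, $c=(c_1,c_2)$), strictly negative exactly because $a$ and $c$ are non-collinear. Thus $M_1$ is a non-degenerate quadratic form of signature $(1,1)$, and likewise $M_2:=\tfrac12(\beta\delta^T+\delta\beta^T)$ is non-degenerate of signature $(1,1)$; having the same signature, these two forms turn \eqref{degdists} into an instance of \eqref{dists}, and Theorem \ref{metr} yields the bound $O(N^3\log N)$ outright. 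The only thing one has to observe is that a product of two linearly independent linear forms in the plane is a non-degenerate binary quadratic form.

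In the spirit of this note one may instead realise the problem directly on the Klein quadric, without diagonalising. Put $X_1=a\cdot p$, $X_2=c\cdot p$, $Y_1=\beta\cdot q$, $Y_2=\delta\cdot q$, and send $(p,q)\in S\times S$ to the line $l_{pq}=\mathfrak F(p,q)\subset\mathbb{RP}^3$ with direction and moment vectors
\[
D_{pq}=\bigl(X_1X_2-Y_1Y_2,\ X_1,\ Y_1\bigr),\qquad M_{pq}=\bigl(1,\ -X_2,\ Y_2\bigr).
\]
Then $D_{pq}\cdot M_{pq}=0$, so $l_{pq}$ is a genuine point of $\mathcal K$; the Pl\"ucker representative is normalised so that the first coordinate of $M_{pq}$ is $1$, and the pairs $a,c$ and $\beta,\delta$ are each independent, whence $\mathfrak F$ is injective; and the reciprocal product of $l_{pq}$ with $l_{p'q'}$ equals
\[
D_{pq}\cdot M_{p'q'}+D_{p'q'}\cdot M_{pq}=(X_1-X_1')(X_2-X_2')-(Y_1-Y_1')(Y_2-Y_2'),
\]
which is exactly the difference of the two sides of \eqref{degdists}. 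Hence $l_{pq}\cap l_{p'q'}\neq\emptyset$ precisely when those two sides agree, and the number of solutions of \eqref{degdists} is at most the number of intersecting pairs among the $N^2$ distinct lines $\{l_{pq}:(p,q)\in S\times S\}$, on which one then wishes to use Theorem \ref{GKT}.

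Granted Theorem \ref{metr}, nothing further is needed; for a self-contained proof the one genuine obstacle is the verification of hypotheses (i)--(iii) of Theorem \ref{GKT} for $\{l_{pq}\}$, and this is precisely the situation treated in the Minkowski analysis of \cite{RR}. Hypothesis (iii) holds, but (i) and (ii) do get violated -- and, as one checks, only along a bounded family of exceptional planes: those swept by the lines $l_{pq}$ with $a\cdot(p-p')=0$ or $c\cdot(p-p')=0$, together with their $q$-side counterparts where $\beta\cdot(q-q')=0$ or $\delta\cdot(q-q')=0$; for instance every $l_{pq}$ with $a\cdot p=\beta\cdot q=0$ lies in the plane at infinity. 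The decisive point, exactly as in \cite{RR}, is that any line--line incidence taking place inside one of these exceptional planes forces the common value in \eqref{degdists} to vanish -- these are the null lines of $M_1$ and $M_2$ -- so once the exceptional planes are excised (the incidences they carry being absorbed by a separate elementary count) and Theorem \ref{GKT} is invoked on what remains, one obtains $O(N^3\log N)$ for the number of solutions with non-zero common value. I expect this last bookkeeping to be the only step that is not a three-line matter; everything preceding it is the routine recipe of this paper.
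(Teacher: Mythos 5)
Your first paragraph is correct and already settles the theorem, but by a genuinely different route from the paper. You observe that only the symmetric part of $ac^T$ matters, so $(p-p')^Tac^T(p-p')=(p-p')^T\tfrac12(ac^T+ca^T)(p-p')$, and $\det\tfrac12(ac^T+ca^T)=-\tfrac14(a_1c_2-a_2c_1)^2<0$ precisely by non-collinearity; hence \eqref{degdists} is literally an instance of \eqref{dists} with two signature $(1,1)$ forms, and Theorem \ref{metr} applies verbatim --- a subsumption the paper never invokes (there is no circularity, since the paper's proof of Theorem \ref{metr} does not use Theorem \ref{degmetr}). The paper instead keeps the non-symmetric degenerate matrices $M_1=ac^T$, $M_2=-\beta\delta^T$, maps $(p,q)$ to the Klein quadric via \eqref{mapd}, proves Lemma \ref{lemdeg} (concurrency or coplanarity can only fail when $p-p'$ lies in a kernel of $M_1$ and $q-q'$ in a kernel of $M_2$), and then concludes not by excising exceptional planes and absorbing their incidences in a separate count, as your last paragraph suggests, but by choosing $O(1)$ pairs of positive-proportion subsets $S^i_1,S^i_2$ of $S$ on which the relevant forms are nonzero on differences and applying the bipartite Theorem \ref{GKTP}; the discarded incidences are exactly the zero-value solutions, which the hypothesis of \eqref{degdists} excludes anyway. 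What your reduction buys is brevity and the observation that Theorem \ref{degmetr} is a corollary of Theorem \ref{metr}; what the paper's route buys is a self-contained illustration of the direct Klein-quadric method (the declared purpose of the note) together with the kernel description of the exceptional incidences. Your ``direct'' sketch in the middle is essentially the paper's map up to a relabelling of the Pl\"ucker coordinates (your reciprocal-product identity does check out), but, as you concede, it leaves the verification of conditions (i)--(ii) and the final bookkeeping unfinished, so as a standalone argument one should rely on your first paragraph.
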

The immediate corollary, in the case $a=\beta,\,c=\delta$ is the lower bound  $\Omega\left(\frac{N}{\log N}\right)$ on the number of values of $(p-p')^T a c^T (p-p')$, cf. \eqref{csuse}. Or, exclusively, all these values are zero.

Note that as far as Theorem \ref{metr} is concerned, if the signature of the quadratic forms involved is $(1,1)$, each quadratic form $M_1,M_2$ has two isotropic directions $p: M_i(p)=0$. Thus, one can trivially have, say half of the points on an isotropic line for $M_1$ and the other half on an isotropic line for $M_2$. Had the zero value been not excluded, the number of solutions of the equation (\ref{dists}) would have been $\Omega(N^4)$. The same scenario may occur as to Theorem \ref{degmetr}. There, in place of $M_1$ one has a non-symmetric degenerate matrix $ac^T$, which has a left isotropic direction --  orthogonal to $a$, and a right one -- orthogonal to $c$. Theorems \ref{metr} and \ref{degmetr} respectively  imply that in the case $M_1=M_2$ and $ac^T=\beta \delta^T$, either the corresponding quadratic form has
$\Omega\left(\frac{N}{\log N}\right)$  distinct values, evaluated on $p-p'$, or the only value it returns is zero.

\medskip
The other type of problems we identify is counting quadruples of points of $S$ which determine similar directions. More precisely, let $\lambda\neq 0$. For $p,p',q,q'\in S$, with $p=(p_1,p_2)$, and so on, what is the maximum number of solutions of the equation
\begin{equation}
\lambda \,\frac{p_2-p_2'}{p_1-p_1'} \;=\; \frac{q_2-q_2'}{q_1-q_1'}\,?
\label{dirs}\end{equation}
The problem of finding the minimum number of distinct directions, determined by a non-collinear set of points in $\R^2$ was solved, up to the best constant, by the early 1980s. See, e.g., \cite{U}, which has a self-explanatory title {\em 2N Noncollinear points determine at least 2N directions} and the references contained therein.

Proving an upper bound on the number of solutions of (\ref{dirs}) appears to be more involved, and  needs the full power of the Guth-Katz theorem. Note that owing to the possible presence of a single very rich line, supporting, say half of the points, the total number of solutions of (\ref{dirs}) can trivially be $\Omega(N^4)$. So, we have to narrow the point sets $S\subset \mathbb R^2$ in question down to the case when every line supports $O(\sqrt{N})$ points, and then ask for the number of solutions of (\ref{dirs}). The example to bear in mind is again the truncated integer lattice, when the number of solutions of equation \eqref{dirs} with $\lambda=1$ is $\Omega(N^3\log N)$.

\begin{theorem}\label{directions} Let $S\subset \R^2$ have $N$ elements with $O(\sqrt{N})$ points on any straight line.  Then, for any $\lambda\neq 0$, the number of solutions of the equation
(\ref{dirs})
is $O(N^3\log N)$.\end{theorem}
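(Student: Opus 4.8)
The plan is to run the paper's scheme once more: realise each pair $(p,q)\in S\times S$ as a line $l_{pq}\subset\mathbb{RP}^3$ so that a solution of \eqref{dirs} becomes an intersection of two of these lines, and then apply Theorem \ref{GKT} to the family $\{l_{pq}\}$. Clearing denominators rewrites \eqref{dirs} as the vanishing of the bilinear expression $F(p-p',q-q')$, where $F(u,v):=\lambda u_2v_1-u_1v_2$; since $F$ is bilinear, $F(p-p',q-q')$ is unchanged by the swap $(p,q)\leftrightarrow(p',q')$, matching the symmetry of line incidence. I would then take
\[
\mathfrak F:\qquad (p,q)\ \longmapsto\ l_{pq}\ :=\ \bigl\{\,(q_1,q_2,0)+t\,(p_1,\lambda p_2,1)\ :\ t\in\R\,\bigr\}\ \subset\ \R^3\subset\mathbb{RP}^3.
\]
As $\lambda\neq0$, the line $l_{pq}$ meets the plane $z=0$ in the single point $(q_1,q_2,0)$ and has direction $(p_1,\lambda p_2,1)$, so $\mathfrak F$ is injective and the $l_{pq}$ are $N^2$ distinct lines in $\R^3$. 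Computing the moment $m=(q_1,q_2,0)\times(p_1,\lambda p_2,1)$ and feeding it into the Pl\"ucker reciprocal-product incidence criterion $d\cdot m'+d'\cdot m=0$ yields, after a one-line expansion, precisely $F(p-p',q-q')$. Hence $l_{pq}$ and $l_{p'q'}$ meet in $\mathbb{RP}^3$ exactly when $F(p-p',q-q')=0$, and two of the lines are parallel precisely when $p=p'$, a configuration for which \eqref{dirs} is undefined and contributes no solution. Consequently every solution of \eqref{dirs} gives a genuine affine crossing of two distinct lines of the family, and it suffices to bound by $O(N^3\log N)$ the number of crossing pairs among the $l_{pq}$.

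It remains to verify the three hypotheses of Theorem \ref{GKT} for $\{l_{pq}\}$. Concurrence (i): through an affine point $(x_1,x_2,x_3)$ with $x_3\neq0$, the condition $(x_1,x_2,x_3)\in l_{pq}$ forces $q_1=x_1-x_3p_1$ and $q_2=x_2-\lambda x_3p_2$, so the admissible pairs form the graph of an affine map applied to $p$, of which there are at most $|S|=N$ inside $S\times S$; the case $x_3=0$ (where $q$ is forced and $p$ is free) and the points at infinity (where a common direction forces a common $p$) likewise give $\le N$ lines. Reguli (iii): a regulus lies on a smooth doubly-ruled quadric $Q$, every $l_{pq}$ passes through $(q_1,q_2,0)$, and through a point of $Q$ there is at most one line of a prescribed ruling; hence for each $q\in S$ at most one $l_{pq}$ lies in a given regulus, so the regulus contains $\le N$ of our lines. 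Neither (i) nor (iii) uses the hypothesis on $S$.

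That hypothesis enters only for co-planarity (ii). A plane $\Pi$ containing some $l_{pq}$ can be neither horizontal (the direction $(p_1,\lambda p_2,1)$ is transverse to every plane $z=\mathrm{const}$) nor the plane at infinity (which each affine line $l_{pq}$ meets in a single point); for any other plane, $l_{pq}\subset\Pi$ forces $(q_1,q_2,0)\in\Pi\cap\{z=0\}$, so $q$ lies on a fixed affine line in $\R^2$, and forces $(p_1,\lambda p_2,1)$ to be parallel to $\Pi$, so $p$ lies on a fixed affine line in $\R^2$; by hypothesis each of these lines carries $O(\sqrt N)$ points of $S$, so $\Pi$ contains at most $O(\sqrt N)\cdot O(\sqrt N)=O(N)$ of the lines $l_{pq}$. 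With (i)--(iii) in place, Theorem \ref{GKT} bounds the number of crossing pairs, hence the number of solutions of \eqref{dirs}, by $O(N^3\log N)$. I expect the only real work to be pinning down the normalisation of $\mathfrak F$ so that the Pl\"ucker incidence product comes out as an exact scalar multiple of $F(p-p',q-q')$ rather than something with extra terms --- the one place a wrong choice would derail the argument, though once guessed it is a short verification --- together with organising the case analysis in (i)--(iii) around the two distinguished planes $z=0$ and the plane at infinity, so that the degenerate positions are seen to contribute no lines or only $O(N)$ of them.
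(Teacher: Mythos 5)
Your proposal is correct and takes essentially the paper's approach: your map $(p,q)\mapsto l_{pq}$ is a concrete instance of the paper's directions case \eqref{mapdir} (a specific choice of $a,b$ in \eqref{map}), the reciprocal product indeed reduces to the cleared form of \eqref{dirs}, and the hypothesis that every line carries $O(\sqrt{N})$ points of $S$ enters exactly where the paper uses it, namely in verifying the coplanarity condition (ii), with (i) and (iii) holding unconditionally, after which Theorem \ref{GKT} gives the bound. The only cosmetic difference is your regulus count, argued via the common base point $(q_1,q_2,0)$ and pairwise skewness within a ruling, in place of the paper's linear-congruence argument in Lemma \ref{reguli}; both yield $O(N)$ lines per regulus.
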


Once again, for $\lambda=1$, Theorem \ref{GKT} provides a sharp bound $O(N^3\log N)$ on the number of solutions of the equation (\ref{dirs}). On the other hand, the logarithmic factor disappears if one asks for the total number of distinct directions. The application of the Cauchy-Schwarz inequality, cf \eqref{csuse}, is to blame for that.

The scopes of Theorems \ref{metr} - \ref{directions} somewhat intersect. E.g., if one takes in Theorem \ref{degmetr} $a=\beta=(0,1)$ and $c=\delta=(1,0)$, then one ends up dealing with Minkowski distances. The same concerns Theorem \ref{directions} in the special case $S=A\times A$, when even somewhat stronger estimates can be obtained via the Szemer\'edi-Trotter theorem, \cite{RN}, \cite{MRS}.

\addtocounter{theorem}{-1}
\renewcommand\thetheorem{\arabic{theorem}$'$}

Theorem \ref{directions} enables quite a far-reaching, in our opinion, generalisation, which gives rise to a whole family of so-called four-variable extractors, that is functions of four variables in a given finite set $A$ of reals, whose range has cardinality $\Omega(|A|^2/\log|A|)$. It follows from Theorem \ref{directions} that, say $$f(a_1,a_2,a_3,a_4)=(a_1-a_2)(a_3-a_4)$$ is such a function, dealing with which, as we mentioned (see \cite{RN}, \cite{MRS}) does not actually need the full might of the Guth-Katz theorem.

However, Theorem \ref{directions} immediately generalises to the following stronger claim (see the following section for background on  Pl\"ucker vectors). 

\begin{theorem}\label{dirG} Let $S\subset \R^2$ have $N$ elements. Consider eight scalar functions $f_1,\ldots,f_4,f'_1,\ldots,f'_4$ on $S\times S$, such that the two sets of $N^2$ lines in $\mathbb R^3$, given by Pl\"ucker vectors
$$\begin{aligned}
\{L_{pq}  &= [f_1:f_2:1:f_3:f_4:-f_1f_3-f_2f_4] (p,q):\,p,q\in S\}, \\ \{L_{p'q'}  & = [f'_1:-f'_2:1:f'_3:f'_4:-f'_1f'_3+f'_2f'_4] (p',q'):\,p',q'\in S\}\end{aligned}
$$
satisfy the conditions of the forthcoming Theorem \ref{GKTP}.

Then the equation \begin{equation}
 [ f_1(p,q)-f_1'(p',q') ]  [ f_3(p,q)-f_3'(p',q') ]  =  [ f_2(p,q)-f_2'(p',q') ]  [ f_4(p,q)-f_4'(p',q') ] :\;\; p,\ldots, q'\in S
\label{genr}\end{equation}
has $O(N^3\log N)$ solutions.
\end{theorem}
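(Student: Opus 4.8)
\emph{Proof proposal.} The plan is to run the Klein--quadric argument behind Theorem~\ref{directions}, now fed by the two abstract families of Pl\"ucker vectors in place of any concrete geometric construction. Recall the dictionary to be used: a point $[\ell_1:\cdots:\ell_6]\in\mathbb{RP}^5$ lying on the Klein quadric $\mathcal K$, i.e. satisfying $\ell_1\ell_4+\ell_2\ell_5+\ell_3\ell_6=0$, represents a line in $\mathbb{RP}^3$, and two such lines meet if and only if their Pl\"ucker vectors are conjugate with respect to $\mathcal K$, that is, the associated symmetric (polar) bilinear form
$$\langle\ell,\ell'\rangle=\ell_1\ell_4'+\ell_1'\ell_4+\ell_2\ell_5'+\ell_2'\ell_5+\ell_3\ell_6'+\ell_3'\ell_6$$
vanishes. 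The first thing to verify is that the prescribed sextuples $L_{pq}$ and $L_{p'q'}$ do land on $\mathcal K$; this is immediate, since in each the sixth coordinate has been written precisely so that the Klein relation becomes an identity (for $L_{pq}$: $f_1 f_3+f_2 f_4+1\cdot(-f_1f_3-f_2f_4)=0$, and similarly for $L_{p'q'}$ with its sign changes).

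The crux is then the single computation of $\langle L_{pq},L_{p'q'}\rangle$. Substituting the two sextuples and grouping the monomials in $(f_1,f_1',f_3,f_3')$ separately from those in $(f_2,f_2',f_4,f_4')$, the form should collapse --- which is exactly what the deliberate signs ``$-f_2'$'' and ``$+f_2'f_4'$'' in $L_{p'q'}$ are there to arrange --- to a nonzero scalar multiple of
$$[f_1-f_1'][f_3-f_3']-[f_2-f_2'][f_4-f_4'].$$
Hence $L_{pq}\cap L_{p'q'}\neq\emptyset$ precisely when the quadruple $(p,q,p',q')$ solves \eqref{genr}, and therefore the number of solutions of \eqref{genr} is exactly the number of pairs $((p,q),(p',q'))\in(S\times S)\times(S\times S)$ for which the lines $L_{pq}$ and $L_{p'q'}$ intersect.

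It remains to bound this incidence count, and here the hypothesis of the theorem enters. Regard the two families together as a single collection of $O(N^2)$ lines in $\mathbb{RP}^3$ (if the statement of Theorem~\ref{GKTP} insists on exactly $n^2$ lines, pad with generic lines so that the total number is a perfect square $n^2$ with $n=\Theta(N)$; this is harmless, since generic lines contribute only $O(1)$ to any count of concurrent, coplanar, or commonly-ruled lines). By assumption this collection meets the hypotheses of Theorem~\ref{GKTP}, which then bounds the number of intersecting pairs within it by $O(N^3\log N)$; a fortiori the number of intersecting pairs with one line drawn from each of the two families is $O(N^3\log N)$, which is the assertion.

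No single step is deep: the proof is the Pl\"ucker dictionary followed by one application of the projective Guth--Katz theorem, its real content being the observation that \emph{any} pair of line families of the displayed shape secretly encodes a quadratic equation of the type \eqref{genr}. The one place that demands genuine care is the sign bookkeeping in the bilinear-form step --- matching the chosen conventions for $\mathcal K$ and for the conjugacy form against the precise signs built into $L_{pq}$ and $L_{p'q'}$ so that $\langle L_{pq},L_{p'q'}\rangle$ reproduces the two sides of \eqref{genr} on the nose --- together with the minor, already-flagged point that Theorem~\ref{GKTP} is being applied to the union of two families rather than to one.
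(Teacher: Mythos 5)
Your outline is structurally the same as the paper's own (very brief) proof: the paper merely observes that the computation which produced \eqref{nonlin} never used linearity of the four scalar quantities, so the zero reciprocal-product condition \eqref{intersectionv} for the two displayed families is an instance of \eqref{genr}, and Theorem \ref{GKTP}, whose hypotheses are assumed, then bounds the number of intersecting cross pairs by $O(N^3\log N)$. So the dictionary you describe is exactly the intended argument.

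The problem is that the one step you yourself flag as ``demanding genuine care'' --- the sign bookkeeping in $\langle L_{pq},L_{p'q'}\rangle$ --- is precisely the step you did not perform, and it does not come out ``on the nose'' for the sextuples as displayed. Writing $\om=(f_1,f_2,1)$, $\ve=(f_3,f_4,-f_1f_3-f_2f_4)$ and $\om'=(f'_1,-f'_2,1)$, $\ve'=(f'_3,f'_4,-f'_1f'_3+f'_2f'_4)$, the reciprocal product is
\[
\om\cdot\ve'+\ve\cdot\om'=-(f_1-f'_1)(f_3-f'_3)-(f_2+f'_2)(f_4-f'_4),
\]
so intersection of the two lines is equivalent to $(f_1-f'_1)(f_3-f'_3)+(f_2+f'_2)(f_4-f'_4)=0$, in which the $f_2,f'_2$ terms enter with a plus sign, not to \eqref{genr}. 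To encode \eqref{genr} literally, the sign flip must be present in \emph{both} families (e.g.\ the first family should read $[f_1:-f_2:1:f_3:f_4:-f_1f_3+f_2f_4]$), equivalently one relabels $f_2\mapsto -f_2$; this is in effect a sign slip in the statement itself, which the paper's proof glosses over with the phrase ``adjusting the signs'', but your claim that the form collapses to a scalar multiple of $[f_1-f'_1][f_3-f'_3]-[f_2-f'_2][f_4-f'_4]$ is false for the data as written, and carrying out this two-line computation (and recording the needed sign correction) is essentially the whole content of the proof. Two smaller points: there is no need to merge the families into one padded collection, since Theorem \ref{GKTP} is stated exactly for two families of $N^2$ lines with per-family concurrency and coplanarity hypotheses, and should be applied directly to the pair $(\{L_{pq}\},\{L_{p'q'}\})$ --- the merged route tacitly requires the hypotheses of Theorem \ref{GKT} for the union, which holds with doubled constants but is an unnecessary extra check; and the identification of solution quadruples with intersecting line pairs presupposes that each family really consists of $N^2$ distinct lines, i.e.\ injectivity of $(p,q)\mapsto L_{pq}$, which is implicit in the statement and worth a sentence.
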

Although the statement of Theorem \ref{dirG} is conditional, the reader will see that  checking the conditions of Theorem \ref{GKTP} in lesser generality is routine. 

As a particular case of Theorem \ref{dirG} one can take $f_1, f_1',f_3,f_3' $ as functions of $p$ only and $f_2, f_2',f_4,f_4' $ of $q$ only, equal respectively to $f_1, f_1',f_3,f_3' $ once $q$ replaces $p$. This gives rise to the equation
\begin{equation}
 [ f_1(p)-f_1'(p') ]  [ f_2(p)-f_2'(p') ]  =  [ f_1(q)-f_1'(q') ]  [ f_2(q)-f_2'(q') ].
\label{extr}\end{equation}
In particular, once $S=A\times A$, a Cartesian product, so $p=(a_1,a_2)$ and so on, we expect any ``reasonable''  set of, say four polynomial functions $\{f_1,\ldots,f_2'\}$, satisfy the conditions of the theorem. We believe that specific examples are better off being considered within their specific scope.

\medskip
We end this section by stating the  following slight generalisation of the Guth-Katz theorem, Theorem \ref{GKT}, which is implicit in \cite{RR}. It will be used ``as a hammer'' after the initial set-up procedure in the Klein quadric, the main focus of this paper, has been completed. 
\addtocounter{theorem}{-5}

\begin{theorem}\label{GKTP} Let $L_1,L_2$ be two distinct sets of $N^2$ lines each in $\R^3$, such that

(i)
at any concurrency point there meet no more than $O(N)$ lines from one of the two sets,

(ii)
 no more than $O(N)$ lines from one of the two sets lie in a plane,

(iii) no more than $O(N)$ lines lie in a regulus.

Then the number of intersecting pairs of lines $(l_1,l_2)\in L_1\times L_2$ is $O({N^3}\log N).$
\end{theorem}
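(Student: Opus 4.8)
The plan is to reduce the bipartite count to Theorem~\ref{GKT} by a heavy/light decomposition, the hypotheses (i)--(iii) above being used to control the ranges where the corresponding hypotheses of Theorem~\ref{GKT} would fail for the union $\mathcal L:=L_1\cup L_2$. Write $\mathcal I$ for the number of intersecting pairs $(l_1,l_2)\in L_1\times L_2$ to be bounded. Discarding the at most $N^2$ pairs with $l_1=l_2$, every remaining pair meets in a single point, so it suffices to bound $\sum_{P}a(P)\,b(P)$, the sum over $P\in\mathbb{RP}^3$ lying on at least one line of each family, where $a(P)=|\{l\in L_1:\,P\in l\}|$ and $b(P)=|\{l\in L_2:\,P\in l\}|$. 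Note that $\mathcal L$ has at most $2N^2$ lines.

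First, the heavy points. Let $C$ be the constant implicit in hypotheses (i) and (ii) above; by (i) one has $\min(a(P),b(P))\le CN$ at every $P$, so the only points at which hypothesis (i) of Theorem~\ref{GKT} could fail for $\mathcal L$ are those with $\max(a(P),b(P))>CN$. The elementary point is that a fixed line $l\in L_1$ contains at most $|L_1|/(CN)\le N/C$ points with $a(P)>CN$, because through each such point run more than $CN$ further lines of $L_1$ and these sets are pairwise disjoint, two distinct lines meeting in a single point. Hence $\sum_{P:\,a(P)>CN}a(P)\le N^3/C$, and since $b(P)\le CN$ at such points their contribution to $\sum a(P)b(P)$ is $O(N^3)$; symmetrically for the points with $b(P)>CN$. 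Running the same device on planes instead of points disposes of the pairs $(l_1,l_2)$ whose (unique) common plane carries more than $CN$ lines of one of the two families: a fixed line lies in at most $N/C$ such planes, since distinct planes through it meet only in that line, and by hypothesis (ii) the other family contributes $\le CN$ lines to such a plane; this is again an $O(N^3)$ contribution.

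After these excisions it remains to bound the part of $\sum a(P)b(P)$ coming from points with $a(P),b(P)\le CN$ and from pairs whose common plane carries $\le CN$ lines of each family. On this residual configuration $\mathcal L$ has $O(N)$ lines through any relevant point, $O(N)$ lines in any relevant plane, and, by hypothesis (iii), $O(N)$ lines in any regulus, so the Guth--Katz incidence count is available for $\mathcal L$. Bucketing dyadically in $\max(a(P),b(P))\sim 2^j$ with $1\le 2^j\lesssim N$, and using $a(P)b(P)\le\max(a(P),b(P))^2$ together with the Guth--Katz estimate $\#\{P:\text{ at least }2^j\text{ lines of }\mathcal L\text{ through }P\}=O(|\mathcal L|^{3/2}\,2^{-2j})=O(N^3 2^{-2j})$, each of the $O(\log N)$ scales contributes $O(N^3)$, so the residual sum is $O(N^3\log N)$. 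Combined with the two earlier $O(N^3)$ estimates, this yields $\mathcal I=O(N^3\log N)$.

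The hard part will be this last step, because Theorem~\ref{GKT} as quoted needs its hypotheses to hold globally, whereas here they have only been arranged after excising the heavy points and the colour-rich planes, which is not the same as passing to a subfamily of lines. I expect to close this gap by using the finer Guth--Katz estimate for the number of $r$-rich points of a line family (which requires only the plane and regulus conditions, and in the range $r\lesssim N$ tolerates rich planes once their contribution has been separately accounted for), exactly as the failure of hypothesis (ii) of Theorem~\ref{GKT} is handled in \cite{RR}; what is required is this bookkeeping rather than any new idea, which is presumably why the statement is said to be implicit in \cite{RR}.
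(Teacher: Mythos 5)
You should first note that the paper does not actually prove Theorem \ref{GKTP}: it is stated as being ``implicit in \cite{RR}'' and used as a black box, so your proposal has to stand on its own — and as written it does not. The first concrete problem is in your two excision steps, where the claimed $O(N^3)$ bounds do not follow from the arguments you give. From $\sum_{P:\,a(P)>CN}a(P)\le N^3/C$ together with $b(P)\le CN$ at such points you can only conclude that the heavy-point contribution is at most $(N^3/C)\cdot CN=N^4$; likewise the plane step gives $N^2$ lines $\times$ at most $N/C$ colour-rich planes per line $\times$ at most $CN$ lines of the other family per such plane $=N^4$. No rearrangement of this disjointness counting does better: to get the truth (which is $O(N^3\log N)$, not $O(N^3)$) one needs an actual rich-point estimate for a single family, e.g.\ $\#\{P:\,a(P)\ge k\}=O(N^4/k^3+N^2/k)$ via a generic projection to the plane and Szemer\'edi--Trotter, which yields $\sum_{P:\,a(P)>CN}a(P)=O(N^2\log N)$; for rich planes one can then invoke point--plane duality in $\mathbb{RP}^3$ (lines go to lines, containment to incidence) and repeat. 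So these steps are repairable, but with tools strictly stronger than what you use, and with a $\log N$ loss you did not account for.

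The more serious gap is the step you yourself flag as ``the hard part'' and then dismiss as bookkeeping. Your dyadic bound for the residual sum rests on $\#\{P:\ \ge 2^j \text{ lines of }\mathcal L\text{ through }P\}=O(|\mathcal L|^{3/2}2^{-2j})$, which is the Guth--Katz rich-point theorem and requires that no plane contain more than $O(\sqrt{|\mathcal L|})=O(N)$ lines of $\mathcal L$. Hypothesis (ii) of Theorem \ref{GKTP} allows a plane to contain \emph{all} $N^2$ lines of one colour (only the other colour is capped at $O(N)$ there), and excising heavy points and colour-rich \emph{pairs} removes no lines from $\mathcal L$, so the quoted estimate is simply unavailable; it is in fact false in this generality, since placing $L_1$ in a single plane in Szemer\'edi--Trotter-extremal position produces on the order of $N^4 2^{-3j}$ points that are $2^j$-rich, which dwarfs $N^3 2^{-2j}$ for $2^j\ll N$. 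Handling a possibly entirely coplanar colour class — by counting its bipartite intersections inside such planes directly, using that the other colour has $O(N)$ lines in the plane and $O(N)$ lines through each of its points, and only then running the Guth--Katz machinery on a pruned family — is precisely the content of the theorem and of the argument in \cite{RR}; it is a missing idea here, not bookkeeping. As it stands, your proposal proves only the trivial bound $O(N^4)$.
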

\addtocounter{theorem}{6}
\renewcommand\thetheorem{\arabic{theorem}}

\section{Mapping pairs of points to Klein quadric}
In this section we see what happens if one takes a pair of points $(p,q)\in S\times S$ and maps it linearly and injectively to the Klein quadric $\Ka$, thereby defining a line $l_{pq}$ in $\mathbb{FP}^3$. This can be done in many ways. We seek to identify the maps, where one is able to interpret the intersection of $l_{pq}$ with  $l_{p'q'}$  in $\mathbb{FP}^3$ as an instance of the general equation (\ref{gen}). This roughly speaking requires the pairs of variables $(p,q)$, $(p',q')$ corresponding to the lines  $l_{pq}$ and $l_{p'q'}$   to separate into pairs $(p,p')$, $(q,q')$.

\subsection{Background}\label{Bg}
We start with a minimum background which casts, in particular, Conditions (i)-(iii) of Theorem \ref{GKT} in terms of the Klein quadric ${\mathcal K}$. See \cite{JS} for more details.

The space of lines in $\mathbb{FP}^3$ is represented as a projective quadric, known as  the Klein quadric $\mathcal K$ in $\mathbb{FP}^5$, with projective coordinates $(P_{01}:P_{02}:P_{03}:P_{23}:P_{31}:P_{12})$, known as Pl\"ucker coordinates. The line through two points $(q_0:q_1:q_2:q_3)$ and $(u_0:u_1:u_2:u_3)$ in $\mathbb{FP}^3$ has Pl\"ucker coordinates, defined as follows
\begin{equation}
P_{ij}=q_iu_j-q_ju_i.
\label{Pc}\end{equation}
Hence, for a line in $\mathbb{F}^3$, obtained by setting $q_0=u_0=1$, the  Pl\"ucker coordinates acquire the meaning of a projective pair of three-vectors $(\om: \ve)$, where $\om$ is a vector in the direction of the line and for any point $\qu=(q_1,q_2,q_3)$  on the line, $\ve = \qu\times\om$ is the line's moment vector, with respect to some fixed origin. We use the boldface notation for three-vectors throughout.

Conversely, one can denote $\om=(P_{01},P_{02},P_{03}),\; \ve=(P_{23},P_{31},P_{12}),$  the Pl\"ucker coordinates then become $(\om:\ve)$, and treat $\om$ and $\ve$ as vectors in $\mathbb{F}^3$, bearing in mind that, in fact, as a pair they are projective quantities. The lines in the plane at infinity in $\mathbb{FP}^3$ are represented by Pl\"ucker vectors $(\boldsymbol 0:\ve).$ The equation of the Klein quadric ${\mathcal K}$ in $\mathbb{FP}^5$ is
\begin{equation}
P_{01}P_{23}+P_{02}P_{31}+P_{03}P_{12}=0,\;\mbox{ i.e. }\; \om\cdot\ve=0.
\label{Klein}\end{equation}
Equivalently, equation \eqref{Klein} arises after writing out, with the notations \eqref{Pc}, the condition
$$
\det\left(\begin{array}{cccccc} q_0&u_0&q_0&u_0\\q_1&u_1&q_1&u_1\\q_2&u_2&q_2&u_2\\q_3&u_3&q_3&u_3\end{array}\right) = 0.
$$
Two  lines  $l,l'$ in  $\mathbb{FP}^3$, represented by points $L,L'\in \Ka$, with Pl\"ucker coordinates $$L=(P_{01}:P_{02}:P_{03}:P_{23}:P_{31}:P_{12}),\qquad L'=(P'_{01}:P'_{02}:P'_{03}:P'_{23}:P'_{31}:P'_{12})$$ meet  in  $\mathbb{FP}^3$ if and only if
\begin{equation}\label{intersection}
P_{01}P'_{23} +  P_{02}P'_{31} + P_{03}P'_{12} + P'_{01}P_{23} +  P'_{02}P_{31} + P'_{03}P_{12}\;=\;0.\end{equation}
The left-hand side in the elation \eqref{intersection} above is known as the reciprocal product, and can be re-stated as $L^T \mathcal Q L'=0$, where 
$$
\mathcal Q = \left(\begin{array}{ccc} 0 & I_3\\ I_3 & 0\end{array}\right),
$$
where $I_3$ is the $3\times 3$ identity matrix. To avoid confusion we use the lowercase notation for lines $l$ in $\mathbb{FP}^3$; they are represented by points $L\in \Ka$, the uppercase notation.

If the Pl\"ucker coordinates of the two lines are written as $L=(\om:\ve)$ and  $L'=(\om':\ve')$, then  the zero reciprocal product condition can be expressed as
\begin{equation}\label{intersectionv}
\om\cdot \ve'+ \ve\cdot \om' = 0.\end{equation}

Using the latter three equations, it is easy to see, by taking the gradient of \eqref{Klein} that a $\mathbb{FP}^4$ in $\mathbb{FP}^5$ is tangent to $\Ka$ at some point $L$ if an only if the corresponding dual vector, defining the hyperplane is itself in the Klein quadric in $\mathbb{FP}^{5*}$. Moreover, it follows from (\ref{intersection}) that $T_L \Ka\cap \Ka$ consists of $L'\in \Ka$, representing all lines $l'$ in $\mathbb{FP}^3$, incident to the line $l$. This set of lines is usually called a singular line complex.

\medskip
The largest dimension of a projective subspace contained in $\Ka$ is two. Copies of $\mathbb{FP}^2$ contained in $\Ka$ have important meaning which we describe next. To this end, $\Ka$ has two  (assuming ${\rm char}(\mathbb F)\neq 2$) ``rulings'' by planes, which lie entirely in the quadric, with the fibre space of each ruling being $\mathbb{FP}^3$. The other important type of subvarieties in  $\Ka$, relevant to the subject of this note are conics, arising as transverse intersections of $\Ka$ with two-planes.

The Klein quadric contains a three-dimensional family of projective two-planes, called $\alpha$-planes. Elements of a single $\alpha$-plane are lines, concurrent at some point $(q_0:q_1:q_2:q_3)\in \mathbb{FP}^3$. If  the concurrency point is $(1:\qu)$, which is identified with $\qu\in \mathbb F^3$,  the $\alpha$-plane is a graph $\ve = \qu\times \om$. Otherwise, an ideal concurrency point  $(0:\om)$ gets identified with some fixed $\om$, viewed as a  projective vector. The corresponding $\alpha$-plane is the union of the set of parallel lines in $\mathbb{F}^3$ in the direction of $\om$, with Pl\"ucker coordinates $(\om:\ve)$, so $\ve\cdot\om=0,$ by \eqref{Klein}, and the set of lines in the plane at infinity incident to the ideal point $(0:\om)$. The latter lines have Pl\"ucker coordinates $(\boldsymbol 0:\ve),$ with once again $\ve\cdot\om=0$.

Similarly, the Klein quadric contains another three-dimensional family of two-planes, called $\beta$-planes, which represent co-planar lines in $\mathbb{FP}^3$.
A ``generic'' $\beta$-plane is a graph $\om = \ur\times \ve$, for some $\ur \in \mathbb F^3$. The case $\ur =\boldsymbol 0$ corresponds to the plane at infinity, otherwise the equation of the co-planarity plane in $\mathbb{F}^3$ becomes
\begin{equation}\ur\cdot \qu =-1.\label{refer}\end{equation}
If $\ur$ gets replaced by a fixed ideal dual point $(0:\ve)$, the corresponding $\beta$-plane comprises lines, coplanar in planes through the origin: $\ve\cdot \qu = 0$. The corresponding $\beta$-plane in the Klein quadric is formed by the set of lines with Pl\"ucker coordinates $(\om:\ve)$, plus the set of lines through the origin in the co-planarity plane. The latter lines have Pl\"ucker coordinates $(\om:\boldsymbol 0)$. In both cases one requires $\om\cdot\ve = 0$.

Two planes of the same ruling of $\Ka$ always meet at a point, which is the line defined by the two concurrency points in the case of $\alpha$-planes. A $\alpha$- and a $\beta$-plane typically do not meet; if they do this means that the concurrency point, defining the $\alpha$-plane lives in the plane $\pi$, defining the $\beta$-plane.
The intersection is then a straight line, a copy of $\mathbb{FP}^1$ in ${\mathcal K}$, representing a {\em plane pencil of lines} -- the lines in $\pi$ via the concurrency point. These are lines in $\mathbb{FP}^3$, which are co-planar in $\pi$ and concurrent at the concurrency point. Conversely, each line in ${\mathcal K}$ identifies the pair ($\alpha$-plane, $\beta$-plane), that is the plane pencil of lines uniquely. Moreover points $L,L'\in \Ka$ can be connected by a straight line in $\Ka$ if and only if the corresponding lines $l,l'$ in $\mathbb{FP}^3$ meet, cf. \eqref{intersection}.

\medskip
These $\alpha$- and $\beta$-planes represent a specific case when a subspace $\Pi= \mathbb{FP}^2$ of $\mathbb{FP}^5$ is contained in $\Ka$. A semi-degenerate case is when the two-subspace $\Pi$ contains a line in $\Ka$. The non-degenerate situation would be the two-plane intersecting $\Ka$ along a conic. If the field $\mathbb{F}$ is algebraically closed, then any $\Pi$ intersects $\Ka$. Otherwise this is not necessarily the case, take e.g the case when $\Pi$ is defined by the condition $\om =\ve$ for $\mathbb F=\mathbb R$.  

Assume that the equations of the two-plane $\Pi$ can be written as
$$
A\om + B\ve = \boldsymbol 0,
$$
where $A,B$ are some $3\times3$ matrices. How can one describe the union in $\mathbb{FP}^3$ of lines represented by $\Pi\cap\Ka$? For points in $\Pi\cap\Ka$, which do not represent lines in the plane at infinity in $\mathbb{FP}^3$, we can write $\ve =\qu\times \om$, where $\qu$ is some point in $\mathbb{F}^3$, on  the line  with Pl\"ucker coordinates $(\om:\ve)$, and $\om\neq \boldsymbol 0$. If $Q$  is the skew-symmetric matrix $ad(\qu)$ (that is the cross product of $\qu$  with a vector is $Q$ times this vector as a column-vector) we obtain
$$
(A-BQ)\om =\boldsymbol 0\qquad \Rightarrow\qquad \det(A-BQ)=0.
$$
This a quadratic equation in $\qu$, since $Q$ is a $3\times 3$ skew-symmetric matrix, so $\det Q=0$.  If the above equation has a  linear factor in $\qu$, defining a plane in $\mathbb{FP}^3$, then $\Pi\cap \Ka$ contains a line, which represents a pencil of lines in the latter plane in $\mathbb{FP}^3$. If the above quadratic polynomial in $\qu$ is irreducible, then  if  the field $\mathbb{F}$ is algebraically closed we always get a quadric surface in $\mathbb{FP}^3$. This is the precisely the non-degenerate intersection case, when $\Ka\cap \Pi$ is a conic.

In the latte case the two-plane $\Pi$  in $\mathbb{FP}^5$ can be obtained as the intersection of three four-planes, tangent to $\Ka$ at some three points $L_1,L_2,L_3$, corresponding to three mutually skew lines in $\mathbb{FP}^3$. Thus the intersection is a regulus: the set of all lines in $\mathbb{FP}^3$, meeting three given mutually skew lines $l_1,l_2,l_3$.
\label{BG}

\subsubsection{Proof of Theorem \ref{sh}}
We now move on to proofs of our main results. For motivation, let us first show how the Elekes-Sharir symmetry argument can be bypassed if one deals with the second moment estimate for plane Euclidean distances. Rewrite the equation (\ref{intrs}) as
$$
p\cdot p' - q\cdot q' - (\|p\|^2+\|p'\|^2 - \|q\|^2-\|q'\|^2) = 0.
$$
(For two-vectors we do not use the boldface notation.)
The latter equation (cf. \eqref{esline} and \eqref{intersection}) is the condition of the zero reciprocal product of two points $L_{pq}$ and $L_{p'q'}$ in the Klein quadric, with the Pl\"ucker coordinates
\begin{equation}\label{rp}
L_{pq}= \left[\frac{q_2-p_2}{2}: \frac{p_1-q_1}{2}: 1: \frac{p_2+q_2}{2}:  - \frac{p_1+q_1}{2}:  \frac{\|p\|^2-\|q\|^2}{4}\right],
\end{equation}
the same with prime indices for $L_{p'q'}$, where $p=(p_1,p_2)$, etc. The above expression just the Pl\"ucker coordinate expression for the line, given by equation \eqref{esline}. Indeed, the first three Pl\"ucker coordinates are the line's direction vector $\om$, the remaining three are the cross product of the point $\qu= \left(\frac{p_1+q_1}{2},\frac{p_2+q_2}{2} ,0\right)$ on the line with $\om$.

Hence, estimating the number of solutions of \eqref{intrs} is tantamount to estimating the number of pairwise intersections of the lines $\{l_{pq}\}_{(p,q)\in S\times S}$. The fact that this set of lines satisfies the hypotheses of Theorem \ref{GKT} is verified in \cite{GK}; we will shortly do this as to the lines arising in the context of Theorem \ref{sh}.

We now prove Theorem \ref{sh} in the case when the point set  $S$ is supported on the two-sphere $\mathbb S^2$ and for the hyperbolic model. Along the lines of Section \ref{BG} we use boldface notation for three-vectors, except in the notations for the lines $l_{pq}$.

\begin{proof}
Let $S\subset \mathbb S^2$ be the set of $N$ points. Let $\boldsymbol p=(p_1,p_2,p_3)$ be the Euclidean coordinates of $\boldsymbol p\in S$. Clearly, the geodesic segment congruency condition \eqref{intrs} the distance now being the restriction of the Euclidean distance on $\mathbb S^2$ rewrites as
\begin{equation}\label{prz}
\boldsymbol p\cdot \boldsymbol p' = \boldsymbol q\cdot \boldsymbol q',
\end{equation}
where $\cdot$ is the dot product in $\R^3$ (the distance between $\boldsymbol p$ and $\boldsymbol p'$ on the unit sphere being $\arccos(\boldsymbol p\cdot \boldsymbol p' ).$)

The latter can be rewritten as
\begin{equation}\label{prz1}(\boldsymbol p+\boldsymbol q)\cdot(\boldsymbol p'-\boldsymbol q') + (\boldsymbol p'+\boldsymbol q')\cdot(\boldsymbol p-\boldsymbol q) = 0. \end{equation}

Now, the left-hand side is the reciprocal product of two Plucker vectors $L_{pq}$ and $L_{p'q'}$, where
\begin{equation}\label{rpz}
L_{pq} = (\boldsymbol p+\boldsymbol q: \boldsymbol p-\boldsymbol q)= [p_1+q_1: p_2+q_2: p_3+q_3: p_1-q_1: p_2-q_2: p_3-q_3],
\end{equation}
similarly for $L_{p'q'}$.  (We do not use the boldface notations for the subscripts in $L_{pq}$, for the set $S$ is two-dimensional.) Observe that the reciprocal product of $L_{pq}$ with itself equals $\|\boldsymbol p\|^2-\|\boldsymbol q\|^2$, which is zero for any $\boldsymbol p, \boldsymbol q\in S$.

Let us verify that the corresponding set of lines $\{l_{pq}\}_{(p,q)\in S\times S}$ satisfies the hypotheses of Theorem \ref{GKT}. Consider the hypothesis (i). Assuming concurrency at some point in $\mathbb R^3$, there is $\boldsymbol u=(u_1,u_2,u_3)$, such that
\begin{equation}\label{rpzm}
\boldsymbol u\times (\boldsymbol p+\boldsymbol q) = (\boldsymbol p-\boldsymbol q).
\end{equation}
Let $U$ be the skew-symmetric matrix $ad(\boldsymbol u)$, thus, with $I$ for the $3\times 3$ identity matrix, we have
$$
(U-I)\boldsymbol p = -(U+I)\boldsymbol q.
$$
Both matrices in brackets is non-degenerate, and therefore for every $\boldsymbol p$ we have at most one $\boldsymbol q$, satisfying the latter equation. If concurrency occurs at a point at infinity, this fixes $\boldsymbol p+\boldsymbol q$, hence the same conclusion.

Therefore the hypothesis (i) is satisfied: for every concurrency point $\boldsymbol u$, there is at most one line $l_{pq}$ passing through it, for each fixed $\boldsymbol p$.  The verification of (ii) is exactly the same, for now one repeats the argument as to $ (\boldsymbol p+\boldsymbol q) = \boldsymbol u\times(\boldsymbol p-\boldsymbol q). $

Finally, to verify the hypothesis (iii) we refer the reader to the forthcoming Lemma \ref{reguli}, which does it in a fairly general context.

\medskip
In the case of the hyperbolic plane $\mathbb H^2$, by analogue with the above, we take the Hyperboloid model of the hyperbolic metric instead (which is isometric to other models, say $\mathbb H^2$ or the Poincar\'e disk, see e.g. \cite{CFKP}.) I.e. let $\mathbb L\subset \R^3$ (which in the literature stands, apparently, for ``Loid'', \cite{CFKP})  have equation
$$
x_1^2 + x_2^2 -x_3^2 =-1, \qquad x_3>0,
$$
and $S\subset \mathbb L$. The hyperbolic distance between $\boldsymbol p,\boldsymbol p'\in \mathbb L$ equals $\cosh^{-1}(\boldsymbol p\cdot \boldsymbol p')= p_3p'_3-p_1p'_1-p_2p'_2$, that is now (and only through the rest of this proof) $\cdot$ stands for the Minkowski dot product. So the geodesic segment congruency condition \eqref{intrs}, the distance now being the restriction of the Euclidean distance on $\mathbb L$, is given again by \eqref{prz}, \eqref{prz1} only in terms of the Minkowski dot product. 

Now, the left-hand side of  \eqref{prz1}  is the reciprocal product of two Plucker vectors $L_{pq}$ and $L_{p'q'}$, where
\begin{equation}\label{rpzh}
L_{pq} = [p_1+q_1: p_2+q_2:p_3+q_3:p_1-q_1:p_2-q_2: -(p_3-q_3)],
\end{equation}
similarly for $L_{p'q'}$. Thus, the only difference so far with the case $S\subset \mathbb S^2$ is the sign change of the last component of the Pl\"ucker 6-tuple. Observe that the reciprocal product of $L_{pq}$ with itself equals zero for any $\boldsymbol p,\boldsymbol q\in \mathbb L$.

To verify the hypothesis (i) of Theorem \ref{GKT} one now has the analogue of (\ref{rpzm}), with the matrix $D={\rm diaq}(1,1,-1)$ as follows:
$$
D\boldsymbol p-\boldsymbol  u\times \boldsymbol p  = D\boldsymbol q+ \boldsymbol u\times \boldsymbol q.
$$
The condition $D\boldsymbol p- \boldsymbol u\times \boldsymbol p = 0$, which is necessary for having more than one one line $l_{pq}$ passing through the concurrency point $\boldsymbol u$ for each fixed $\boldsymbol q$, means that  $\boldsymbol p$ is such that its reflection w.r.t. the $(x_1x_2)$-plane is tantamount to vector multiplication by $\boldsymbol u$. This is only possible when $u_3=0$ and $\boldsymbol p$ lie on the light cone $x_1^2+x_2^2-x_3^2=0,$ but not on $\mathbb L$. The same conclusion holds for concurrency at infinity, which requires that  $\boldsymbol p+\boldsymbol q$ be fixed. Hence, as long as $\boldsymbol p,\boldsymbol q\in \mathbb L$, the hypothesis (i) of Theorem \ref{GKT} is satisfied. The same argument applies to the hypothesis (ii). To verify the hypothesis (iii) the reader is referred to the forthcoming Lemma \ref{reguli}.

\end{proof}

\subsection{Map $\mathfrak F$ and separating variables}
From now on, till the Appendix, we deal with the plane set $S\in \mathbb F^2$ (to apply the Guth-Katz theorem one must have $\mathbb F=\mathbb R$). We now consider linear maps of $(p,q)\in S\times S$ to $\Ka$ as follows.
Let $a,\alpha, b,\beta, c,\gamma, d,\delta \in \mathbb F^2$. Let
\begin{equation}\label{forms}\begin{aligned} L_1(p,q) = a\cdot p + \alpha\cdot q, \qquad  L_2(p,q) = b\cdot p + \beta\cdot q,\\
L_3(p,q) = c\cdot p + \gamma\cdot q, \qquad  L_4(p,q) = d\cdot p + \delta\cdot q.
\end{aligned}
\end{equation}
(We hope that our re-use of the symbols $\alpha,\beta$ as two-vectors will not cause confusion: in Section \ref{Bg} we defined $\alpha$- and $\beta$-planes in the Klein quadric $\Ka$.)

Map $(p,q)\to \Ka$ as follows:
\begin{equation}\label{map}\begin{aligned}
\mathfrak F:\;(p,q) & \to L_{pq}\\
&=[L_1(p,q):L_2(p,q):1:L_3(p,q):L_4(p,q):- L_1(p,q)L_3(p,q) - L_2(p,q)L_4(p,q)],\end{aligned}
\end{equation}
where the right-hand side is Pl\"ucker coordinates.  The linear forms $L_1,\ldots, L_4$ should be linearly independent to ensure injectivity of the assignment. We shall make explicit checks with the particular choices, in the context of Theorems \ref{metr}-\ref{directions}.

Linear independence of the linear forms $L_1,\ldots, L_4$ alone ensures that Condition (iii) of Theorem \ref{GKT} is satisfied. What follows is an easy generalisation of Lemma 2.9 in \cite{GK}.

\begin{lemma} If the linear forms $L_1,\ldots, L_4$ are linearly independent,  Condition (iii) of Theorem \ref{GKT} is satisfied for the family of lines $\{l_{pq}\}$ defined  by (\ref{map}). It is also satisfied for the families $\{l_{pq}\}$ defined by \eqref{rpz}, \eqref{rpzh} in the context of Theorem \ref{sh}. \label{reguli}\end{lemma}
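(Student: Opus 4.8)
The plan is to translate Condition~(iii) into the Klein‑quadric language of Section~\ref{BG}. A regulus in $\mathbb{FP}^3$ is represented by a conic $R=\Pi\cap\Ka$, with $\Pi\cong\mathbb{FP}^2$ a two‑plane meeting $\Ka$ transversally; in particular $R$ is one‑dimensional, and $l_{pq}$ lies in the regulus exactly when $L_{pq}\in\Pi$. So it suffices to show that any such $\Pi$ contains $O(N)$ of the points $L_{pq}$, and I would in fact obtain the stronger statement that for each fixed value of the first argument there are only $O(1)$ admissible values of the second.

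For the family \eqref{map} the mechanism is that, by linear independence of $L_1,\dots,L_4$, the assignment $(p,q)\mapsto(L_1,L_2,L_3,L_4)(p,q)$ is a linear isomorphism $\F^2\times\F^2\to\F^4$; since the third Pl\"ucker coordinate of $L_{pq}$ equals $1$ and, by \eqref{Klein}, the sixth is then forced to equal $-L_1L_3-L_2L_4$, the map $\mathfrak F$ is a polynomial isomorphism of $(p,q)$‑space onto the affine chart $\{P_{03}=1\}$ of $\Ka$, with inverse linear in the first four Pl\"ucker coordinates. If $\Pi\subseteq\{P_{03}=0\}$ no $L_{pq}$ lies on it and there is nothing to prove; otherwise $R\cap\{P_{03}=1\}$ is an affine conic, and $\{(p,q):l_{pq}\text{ lies in the regulus}\}=\mathfrak F^{-1}(R\cap\{P_{03}=1\})$. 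Since the linear projection forgetting the last Pl\"ucker coordinate is injective on $\{P_{03}=1\}\cap\Ka$ and carries $\Pi$ to a linear space of dimension at most two, this preimage is a line or an irreducible conic $\gamma$ in $\R^2\times\R^2$. It then remains to note that such a $\gamma$ carries $O(N)$ points of $S\times S$: projecting $\gamma$ to the first factor $\R^2$, either the projection is non‑constant, so each value of $p$ has at most two preimages on $\gamma$ (the fibre being $\gamma$ met with an affine two‑plane not containing it), whence at most $2|S|$ points of $\gamma$ have their $p$‑coordinate in $S$; or it is constant, $\gamma\subset\{p_0\}\times\R^2$, contributing nothing unless $p_0\in S$, in which case one projects instead to the second factor, the projection there not being constant too lest $\gamma$ be a single point.

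For the families \eqref{rpz}, \eqref{rpzh} the parametrisation is not of graph type, so I would instead fix $\boldsymbol p\in S$ and use that $\boldsymbol q\mapsto L_{\boldsymbol p\boldsymbol q}$ is the projectivisation of an injective affine map $\R^3\to\R^6$, still injective after projectivising because $\boldsymbol p$ has nonzero (Minkowski) norm; its image is a three‑plane $\Pi_{\boldsymbol p}\subset\mathbb{RP}^5$. All the $\Pi_{\boldsymbol p}$ contain the common two‑plane $\Pi_\infty$ obtained by letting $\boldsymbol q\to\infty$, and two distinct ones meet in exactly $\Pi_\infty$; since $\Pi\neq\Pi_\infty$ --- in the spherical case because $\Pi_\infty\cap\Ka=\varnothing$ by \eqref{Klein}, in the hyperbolic case because $\Pi=\Pi_\infty$ contains no $L_{\boldsymbol p\boldsymbol q}$ with $\boldsymbol p,\boldsymbol q\in\mathbb L$ --- one gets $\Pi\subseteq\Pi_{\boldsymbol p}$ for at most one $\boldsymbol p$, which contributes at most $|S|=N$ values of $\boldsymbol q$. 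For every other $\boldsymbol p$, $\Pi\cap\Pi_{\boldsymbol p}$ has dimension at most one, so, transported back through $\boldsymbol q\leftrightarrow L_{\boldsymbol p\boldsymbol q}$, the admissible $\boldsymbol q$ lie on a line or a point in $\R^3$, meeting $\mathbb S^2$ (respectively $\mathbb L$, neither of which contains a line) in at most two points; summing over $\boldsymbol p\in S$ yields $O(N)$.

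The one genuinely delicate point is the bookkeeping that keeps the one‑dimensional regulus‑conic from containing a positive‑dimensional slice of the family $\{l_{pq}\}$, and this is precisely where the hypothesis is spent: linear independence of $L_1,\dots,L_4$ makes $\mathfrak F$ a global isomorphism onto the affine Klein quadric, which is what forces $\gamma$ to be a line or a plane conic and hence subject to the elementary projection bound; in the setting of \eqref{rpz}, \eqref{rpzh} the analogous input is that $\|\boldsymbol p\|\neq 0$ keeps $\boldsymbol q\mapsto L_{\boldsymbol p\boldsymbol q}$ a projective embedding onto a three‑plane whose intersection with $\Pi$ is as small as claimed. Everything else is routine B\'ezout‑type counting for lines and plane conics.
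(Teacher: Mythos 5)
Your proposal is correct and takes essentially the same route as the paper: in both arguments one fixes $p$, observes that $\{L_{pq}\}_{q}$ spans (or lies in) a projective three-space of $\mathbb{FP}^5$ — the paper phrases this as the linear congruence $\Ka\cap\mathbb{FP}^3$ — and uses that the regulus conic meets it in at most two points unless its plane is contained in it, summing over $p\in S$; your affine-chart/plane-conic $\gamma$ formulation for the family \eqref{map} is just this same count written out in $(p,q)$-coordinates. The only (harmless) slip is in the spherical case, where antipodal points give $\Pi_{\boldsymbol p}=\Pi_{-\boldsymbol p}$, so the containment $\Pi\subseteq\Pi_{\boldsymbol p}$ can occur for two values of $\boldsymbol p$ rather than ``at most one''; this does not affect the $O(N)$ bound.
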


\begin{proof} If the linear forms $L_1,\ldots, L_4$ are linearly independent, the map (\ref{map}) is injective. Fix $p$ and treat $q$ as a variable in (\ref{forms}). This is also the case with the maps $S\times S\to\mathcal K$, defined by \eqref{rpz}, \eqref{rpzh}. For each $p$, the map from $q\to
\mathcal K$ has full rank. Hence, its image in $\Ka$ is the intersection of $\Ka$ with a projective subspace $\mathbb{FP}^3$ in $\mathbb{FP}^5$, which is called in line geometry literature a linear  {\em congruence}\footnote{In fact, the intersection is transverse, in which case this is a {\em linear elliptic congruence}, a two-dimensional family of pair-wise skew lines. This is a well known figure with many interesting properties, for example it can be viewed as a set of reguli on concentric hyperboloids, see e.g., \cite{PW} .}.

A regulus, that is a conic, arising as the transverse intersection of
$\Ka$ with a $\mathbb{FP}^2$, will be either contained in the above congruence or intersect it at most two points.

Hence, given a regulus in $\mathbb{FP}^3$, it is either contained in the family of lines $\{l_{pq}\}_{q\in \mathbb F^2}$ for a fixed $p$ or has at most two lines with the latter set in common. It follows that the maximum number of lines from the finite collection $\{l_{pq}\}$ that can lie in a regulus is $2N$, and therefore at most $4N$ in a doubly-ruled surface in $\mathbb{FP}^3$.
\end{proof}

Two lines $l_{pq}$ and $l_{p'q'}$ in $\mathbb{FP}^3$ defined by \eqref{map} intersect in $\mathbb{FP}^3$ if and only if the zero reciprocal product condition \eqref{intersection} is satisfied. I.e.:
\begin{equation}
\left( L_1(p,q) - L_1(p',q'), L_2(p,q) - L_2(p',q') \right) \cdot \left( L_3(p,q) - L_3(p',q'), L_4(p,q) - L_4(p',q') \right)
 = 0.
\label{nonlin}\end{equation}
Hence (here we use linearity of $L$'s)
$$
L_1(p-p',q-q')L_3(p-p', q-q') + L_2(p-p',q-q')L_4(p-p', q-q') = 0.
$$
This means, in view of (\ref{forms}), introducing three $2\times 2$ matrices
\begin{equation}
M_1 = ac^T + bd^T, \qquad M_2  = -(\alpha \gamma ^T + \beta \delta^T), \qquad M_3 = a\gamma^T + b\delta^T + c\alpha^T+ d\beta^T,
\label{matrices}\end{equation}
that
\begin{equation}
 (p-p')^T M_1(p-p') - (q-q')^TM_2 (q-q') + (p-p')^T M_3(q-q') = 0.
\label{matricesmore}\end{equation}

Our goal is to be able to separate variables in (\ref{matricesmore}), that is to be able to rewrite it as $f(p,p')=g(q,q')$, for some functions $f,g$. There are several cases to consider.

\medskip
Variables will separate if  $M_3=0$ or otherwise possibly when $M_1=M_2=0$.

The condition $M_3=0$ means that
\begin{equation}\label{m3}\left(
    \begin{array}{cccc}
      a_1 & b_1 & c_1 & d_1 \\
      a_2 & b_2 & c_2 & d_2 \\
    \end{array}
  \right)
\left(
  \begin{array}{cc}
    \gamma_1 & \gamma_2 \\
    \delta_1 & \delta_2 \\
    \alpha_1 & \alpha_2 \\
    \beta_1 & \beta_2 \\
  \end{array}
\right) = 0.
\end{equation}
Thus, two pairs of four-vectors $(a_1,b_1,c_1,d_1), (a_2,b_2,c_2,d_2)$ and $(\gamma_1, \delta_1,\alpha_1,\beta_1),(\gamma_2, \delta_2,\alpha_2,\beta_2)$  lie in mutually orthogonal two-spaces in $\mathbb F^4$.

There are three cases to consider in this context as far as the matrices $M_1$ and $M_2$ in (\ref{matricesmore}) are concerned. The first two cases arise in the context of Theorem \ref{metr}. They are: when both $M_1,M_2$ are symmetric positive definite (if $\mathbb F=\mathbb R$ or more generally if $-1$ is not a square in $\mathbb{F}$), and when they are both symmetric signature $(1,1)$ (if $\mathbb F=\mathbb R$ or more generally, $-1$ is a square in $\mathbb{F}$). The third case arises in the context of Theorem \ref{degmetr}: the matrices $M_1,M_2$ are non-symmetric degenerate.

Finally, if $M_3\neq 0$, there will be an additional case when $M_1=M_2=0$ and $M_3$ either diagonal or has zeroes on the main diagonal. This is the subject of Theorem \ref{directions}.

\subsubsection{Positive definite metric case}
Take nonzero\begin{equation}\label{eset}
c=a,\;d=b,\;\gamma=-\alpha,\;\delta=-\beta;\qquad  a\neq \lambda b,\;\alpha\neq\lambda \beta,\mbox{ for }\lambda\in \mathbb F.\end{equation}
Clearly, \eqref{m3} is thus satisfied, and we get from \eqref{matricesmore}:

\begin{equation}\label{eucl}\begin{array}{c}
M_1  = \left(
        \begin{array}{cc}
          a_1^2 +  b_1^2 & a_1a_2 + b_1b_2\\
          a_1a_2 + b_1b_2 & a_2^2 +  b_2^2 \\
        \end{array}
      \right), \;\;\;\; M_2 = \left(
        \begin{array}{cc}
          \alpha_1^2 +  \beta_1^2 & \alpha_1\alpha_2 + \beta_1\beta_2\\
          \alpha_1\alpha_2 + \beta_1\beta_2 & \alpha_2^2 + \beta_2^2 \\
        \end{array}
      \right), \\ \hfill \\
 (p-p')^T M_1(p-p')\; \;= \;\;(q-q')^T M_2 (q-q').
\end{array}\end{equation}
The matrices $M_1$, $M_2$ are symmetric positive definite and generalise the case of the Euclidean distance considered in \cite{GK}. Note that this case differs in a general $\mathbb{F}$ from the next one only if $-1$ is not a square in $\mathbb{F}$.

\subsubsection{Signature $(1,1)$ metric case} To generalise the  case of the Minkowski distance considered in \cite{RR}, take nonzero
\begin{equation}\label{mset}
c=a,\; d = -b, \; \gamma = -\alpha,\;\delta=\beta;\qquad a\neq \lambda b,\;\alpha\neq\lambda \beta,\mbox{ for }\lambda\in \mathbb F.
\end{equation}

Then the variables  in \eqref{matricesmore} separate as follows:
\begin{equation}\label{mink}\begin{array}{c}
M_1  = \left(
        \begin{array}{cc}
          a_1^2 -  b_1^2 & a_1a_2 - b_1b_2\\
          a_1a_2 - b_1b_2 & a_2^2 -  b_2^2 \\
        \end{array}
      \right), \;\;\;\; M_2 = \left(
        \begin{array}{cc}
          \alpha_1^2 -  \beta_1^2 & \alpha_1\alpha_2 - \beta_1\beta_2\\
          \alpha_1\alpha_2 - \beta_1\beta_2 & \alpha_2^2 -  \beta_2^2 \\
        \end{array}
      \right), \\ \hfill \\
 (p-p')^T M_1(p-p')\; \;= \;\;(q-q')^T M_2 (q-q').
\end{array}\end{equation}
The matrices $M_1$, $M_2$ are symmetric non-degenerate, with signature $(1,1)$, thus generalising the Minkowski distance in the case $\mathbb F=\mathbb R$.

To this end, let us calculate the ''light cone'' isotropic directions for the matrices $M_1$, $M_2$.

\begin{lemma} $x=(-(a_2\pm b_2), a_1\pm b_1)$ are isotropic vectors for $M_1$, that is $x^TM_1x=0$. Similarly, $x=(-(\alpha_2\pm \beta_2), \alpha_1\pm \beta_1)$ are isotropic vectors for $M_2$.\label{iso}\end{lemma}

\begin{proof} The verification is a brute force calculation.\end{proof}

\subsubsection{Degenerate case} Take
\begin{equation}\label{dset}
b=d=\alpha=\gamma=0,\;\mbox{ and nonzero }\; a\neq \lambda c;\;\beta\neq\lambda \delta,\mbox{ for }\lambda\in \mathbb F.
\end{equation}
Then the variables in  \eqref{matricesmore} separate as in the last line of \eqref{eucl}, \eqref{mink}:\begin{equation}\label{dc}
M_1 = \left(
        \begin{array}{cc}
          a_1c_1 & a_1c_2\\
          a_2c_1 & a_2c_2 \\
        \end{array}
      \right),
      \qquad
      M_2 = -\left(
        \begin{array}{cc}
          \beta_1\delta_1 & \beta_1\delta_2 \\
          \beta_2\delta_1  & \beta_2\delta_2  \\
        \end{array}
      \right).
\end{equation}
In the formulation of Theorem \ref{degmetr} we've changed $\beta\to-\beta.$

Clearly, $y^TM_1 x=(y\cdot a)(x\cdot c)$, and hence will be zero if and only if either $y$ is orthogonal to $a$ or $x$ is orthogonal to $c$. This defines the left and right kernels for $M_1$, and similarly for $M_2$.

\subsubsection{Directions' case} Variables in \eqref{matricesmore} also separate in the special case when $M_1=M_2=0$, and $M_3$ is diagonal or has zeroes on the main diagonal. We consider the latter situation and set, for some $\lambda\neq 0,$
\begin{equation}\label{dirset}
c=d=\alpha=\beta=0,\; \gamma_1= \lambda b_1,\; \delta_1 = -\lambda a_1,\;\gamma_2= b_2, \; \delta_2 =-a_2,\;a_2b_1-a_1b_2\neq 0.\end{equation}
Hence, we have
\begin{equation}\label{direct}\begin{array}{c}
\lambda_1 = a_2b_1-a_1b_2,\qquad \lambda_2= \lambda\lambda_1,\\ \hfill \\
M_3= \left(
       \begin{array}{cc}
         0&-\lambda_1  \\
         \lambda_2&0 \\
       \end{array}
     \right),
\\ \hfill \\
\lambda \frac{p_2-p_2'}{p_1-p_1'} \;\;= \;\;\frac{q_2-q_2'}{q_1-q_1'}.\end{array}\end{equation}
This generalises the problem of counting pairs of points in $S$, which lie on some line in a given direction, then summing over directions.

\section{Proof of Theorems \ref{metr}-\ref{dirG}}
Theorem \ref{dirG} requires no proof once we observe that the equation \eqref{nonlin} above has not used anything about the quantities $L_1(p,q),\ldots,L_4(p',q')$, except that they are scalar functions and we replace them with $f_1,\ldots,f_4'$ (adjusting the signs) appearing in the statement of Theorem \ref{dirG}. In particular the easy calculation that led to \eqref{nonlin} enables that $L_i'= L_i(p',q'), \;i=1,\ldots,4,$ be different functions from $L_i=L_i(p,q)$.

\medskip
The rest of the arguments do use linearity of the functions $L_i$.

We have identified four cases of the injective map $\mathfrak F:\;S\times S\to \mathcal K$ to obtain families $\{l_{pq}\}_{(p,q)\in S\times S}$ of lines in $\mathbb{FP}^3$, whose pair-wise intersections are in one-to-one correspondence with the solutions of equations \eqref{dists} (the first two cases), \eqref{degdists} (the third case), \eqref{dirs} (the fourth case). Also, by Lemma \ref{reguli}, the regulus condition (iii) if Theorem \ref{GKT} is automatically satisfied by these families of lines. That remains is to check Conditions (i) and (ii) of Theorem \ref{GKT}. It turns out that these conditions may fail, but ``not too badly'', namely that the situation is nonetheless amenable to the slight generalisation of Theorem \ref{GKT}, Theorem \ref{GKTP}.

\subsection{Checking Conditions (i), (ii) of Theorem \ref{GKT}}
In this section we identify the scenarios under which the concurrency/coplanarity conditions of the family of lines $\{l_{pq}\}$ in $\mathbb{FP}^3$ defined by (\ref{map}) may fail, for all of the four cases above. We also describe their possible failures in terms of the underlying problem in the plane. This having been done, proofs of Theorems \ref{metr}-\ref{directions} will be completed in the next section, after the original plane problems have been restricted to ensure that Conditions (i),(ii) of Theorem \ref{GKT} have been satisfied. In the forthcoming argument we will use the discussion in Section \ref{Bg} about $\alpha$- and $\beta$-planes in $\Ka$, corresponding to concurrency/coplanarity of lines in $\mathbb{FP}^3$.

\subsubsection{Positive definite metric case}
In this case the Pl\"ucker vector $L_{pq}$, assigned to $(p,q)$ via (\ref{map}) is
\begin{equation}\label{mape}
[a\cdot p+\alpha\cdot q: b\cdot p+ \beta\cdot q:1:a\cdot p-\alpha\cdot q:b\cdot p- \beta\cdot q: -(a\cdot p)^2 -(b\cdot p)^2 + (\alpha\cdot q)^2 + (\beta\cdot q)^2].
\end{equation}

Note that in the special case (\ref{esline}) of the Euclidean distance, considered in \cite{GK}, one has equation \eqref{rp}, i.e.
$$
L_{pq} = \left[\frac{q_2-p_2}{2}: \frac{p_1-q_1}{2}:1:\frac{p_2+q_2}{2}:-\frac{p_1+q_1}{2}: \frac{p_1^2+p_2^2-q_1^2-q_2^2}{4}\right].
$$

\begin{lemma}\label{lemerd}
The case satisfies conditions of Theorem \ref{GKT}, given that $-1$ is not a square in $\mathbb{F}$.\end{lemma}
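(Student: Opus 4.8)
The plan is to verify Conditions (i) and (ii) of Theorem \ref{GKT} for the family $\{l_{pq}\}$ with Pl\"ucker vectors \eqref{mape}, since Condition (iii) is already handled by Lemma \ref{reguli} (the forms $L_1,\dots,L_4$ here are $a\cdot p+\alpha\cdot q$, $b\cdot p+\beta\cdot q$, $a\cdot p-\alpha\cdot q$, $b\cdot p-\beta\cdot q$, which are linearly independent precisely because $a\neq\lambda b$ and $\alpha\neq\lambda\beta$, cf. \eqref{eset}). The first step is to translate concurrency into the language of Section \ref{Bg}: a concurrency point of several lines $l_{pq}$ corresponds to these lines' images $L_{pq}$ lying in a common $\alpha$-plane of $\Ka$. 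With $\om = (a\cdot p+\alpha\cdot q,\, b\cdot p+\beta\cdot q,\, 1)$ and $\ve$ the last three Pl\"ucker coordinates, concurrency at a finite point $\ur\in\mathbb F^3$ means $\ve = \ur\times\om$, while concurrency at infinity fixes the direction $\om$, i.e.\ fixes $a\cdot p+\alpha\cdot q$ and $b\cdot p+\beta\cdot q$.

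Next I would run the computation that already appears, in essence, in the proof of Theorem \ref{sh}. Writing $\ur = (u_1,u_2,u_3)$ and letting $U = ad(\ur)$, the finite-concurrency equation $\ve = \ur\times\om$ becomes a linear relation between $p$ and $q$ after separating the $p$-part and the $q$-part of $\om$ and $\ve$; schematically one gets $(\text{matrix in }\ur)\,p = (\text{matrix in }\ur)\,q$, and one checks that the matrix multiplying $p$ is invertible for every choice of $\ur$, so that for each fixed $p$ there is at most one $q$ with $l_{pq}$ through $\ur$ — hence at most $N$ lines of the family are concurrent at any point. The invertibility is where the hypothesis that $-1$ is \emph{not} a square in $\mathbb F$ enters: the relevant $2\times 2$ determinant will be a sum of two squares (coming from the $a_1^2+b_1^2$-type entries of $M_1$ in \eqref{eucl}), which vanishes only if both squares vanish, i.e.\ only if $a$ and $b$ are parallel, contradicting \eqref{eset}. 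The concurrency-at-infinity case is even easier, since fixing $a\cdot p+\alpha\cdot q$ and $b\cdot p+\beta\cdot q$ already pins down $q$ for each $p$ because the $2\times2$ matrix with rows $\alpha,\beta$ is invertible.

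Condition (ii) is then dispatched by the same argument with $\alpha$-planes replaced by $\beta$-planes: coplanarity of several $l_{pq}$ means their images lie in a common $\beta$-plane, which by Section \ref{Bg} is a graph $\om = \ur\times\ve$ (plus the infinite cases), and swapping the roles of $\om$ and $\ve$ in the above computation gives exactly the same linear system up to relabelling, with the same sum-of-squares determinant. So for each fixed $p$ again at most one $q$ yields a line $l_{pq}$ in a given plane, giving at most $N$ coplanar lines. The only points needing care are the degenerate sub-cases — concurrency or coplanarity occurring at ideal points, or the moment vector $\ve$ being null — and I would treat these by the same "fix $\om$ (resp. fix $\ve$) forces $q$ from $p$" observation, exactly as in the proof of Theorem \ref{sh}.

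The main obstacle is really just bookkeeping: making sure that \emph{every} flavour of concurrency/coplanarity point (finite vs.\ ideal, and the special position where the defining plane of a $\beta$-plane passes through the origin) is covered and that in each the relevant $2\times2$ matrix is nonsingular. There is no genuine difficulty once one notices that all these matrices are built from $M_1$ or $M_2$ of \eqref{eucl}, which are positive definite exactly when $-1$ is not a square — so the hypothesis of the lemma is precisely what makes the argument go through, and in fact identifies where the Euclidean-type case parts company with the signature-$(1,1)$ case treated next.
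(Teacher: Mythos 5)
Your proposal is correct and follows the paper's own proof essentially step for step: Condition (iii) is delegated to Lemma \ref{reguli}, and Conditions (i), (ii) are checked by writing the $\alpha$-plane condition $\ve=\ur\times\om$ and the $\beta$-plane condition $\om=\ur\times\ve$ as linear systems in $p$ and $q$, showing the resulting $2\times2$ matrices are nonsingular because $-1$ is not a square, and disposing of the ideal/through-the-origin cases by noting that fixing $\om$ (resp.\ $\ve$) already determines $q$ from $p$. One small correction of detail: the relevant determinants are $(u_3^2+1)(a_1b_2-a_2b_1)$ in the concurrency case and $\pm(u_1^2+u_2^2)(a_1b_2-a_2b_1)$ in the coplanarity case (and likewise with $\alpha,\beta$), so the sum of two squares involves the coordinates of $\ur$ and multiplies the nonzero determinant of the independent pair $a,b$, rather than being formed from the $a_1^2+b_1^2$-type entries of $M_1$.
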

\begin{proof}
Since $a$ is not a multiple of $b$ and $\alpha$ is not a multiple of $\beta$, the linear forms $L_1,\ldots, L_4$ thus defined are linearly independent.

Let us check the concurrency condition (i) of Theorem \ref{GKT}.

Let $\ur=(u_1,u_2,u_3)$, $\om=(a\cdot p+\alpha\cdot q, b\cdot p+ \beta\cdot q,1)$, and
$$\ve =(a\cdot p-\alpha\cdot q, b\cdot p- \beta\cdot q, -(a\cdot p)^2 -(b\cdot p)^2 + (\alpha\cdot q)^2 + (\beta\cdot q)^2).$$
Suppose, we deal with the case of concurrency at the point $\ur\in \mathbb F^3$, that is  $\ve = \ur\times\om$. This means,
$$\begin{aligned}
u_1 &= u_3(a\cdot p+\alpha\cdot q) - b\cdot p + \beta\cdot q,\\
u_2 &= u_3(b\cdot p+\beta\cdot q) + a\cdot p - \alpha\cdot q.\end{aligned}
$$
The matrices multiplying $p$ and $q$ are, respectively  $$\left(
                                                           \begin{array}{c}
                                                             u_3a^T - b^T \\
                                                             u_3b^T + a^T \\
                                                           \end{array}
                                                         \right),\qquad \left(\begin{array}{c}
                                                             u_3\alpha^T + \beta^T \\
                                                             u_3\beta^T - \alpha^T \\
                                                           \end{array}
                                                         \right).$$ These matrices, since $a,b$, as well as $\alpha,\beta$ are linearly independent, are non-degenerate, provided that $-1$ is not a square in $\mathbb{F}$.
                                                         Hence, given $\ur\in \mathbb F^3$, at most one line $l_{pq}$ for each $q$ may be incident to $\ur$, that is Condition (i) is satisfied at $\ur$.
                                                         
                                                       The special case of  concurrency at infinity, would fix the values of $a\cdot p+\alpha\cdot q$ and $b\cdot p+ \beta\cdot q$.  (See the discussion in Section \ref{Bg} concerning $\alpha$-and $\beta$-planes.) The same conclusion then follows  by linear independence of of $a$ and $b$, as well as  $\alpha$ and $\beta$.

Let us check the coplanarity condition (ii) of Theorem \ref{GKT}.
Suppose we are in the generic case of planes with equations $\ur\cdot \qu=-1$, when  $\om = \ur\times\ve$, for some $\ur\in \mathbb F^3$. (Throughout this section the boldface $\boldsymbol q$ denotes a variable in $\mathbb{F}^3$, not to be confused with $q\in S$). This means,
\begin{equation}\label{syst}\begin{array}{ccc}
u_1(b\cdot p-\beta \cdot q) - u_2(a\cdot p-\alpha\cdot q) &= &1,\\
u_2 v_3 - u_3 (b\cdot p-\beta \cdot q)  &=&  a\cdot p + \alpha\cdot q,\\
-u_1v_3 + u_3 (a\cdot p-\alpha\cdot q) & =& b\cdot p+\beta \cdot q.\end{array}
\end{equation}
Suppose, both $u_1,u_2\neq 0$. Then we basically copy the discussion as to the concurrency case. From the last two equations, and using the first one
$$\begin{array}{cc}
u_1(b\cdot p-\beta \cdot q) - u_2(a\cdot p-\alpha\cdot q) &= 1,\\
u_1(a\cdot p + \alpha\cdot q)
+ u_2(b\cdot p+\beta \cdot q)&=-u_3.\end{array}
$$
The matrices multiplying $p$ and $q$ are, respectively
$$ \left(
\begin{array}{cc}u_1b^T - u_2a^T \\
u_1a^T  + u_2b^T
\end{array}
\right), \qquad \left(
\begin{array}{cc} -u_1\beta^T + u_2\alpha^T \\
u_1\alpha^T  + u_2\beta^T
\end{array}
\right), 
$$
Since pairs of vectors $a,b$ and $\alpha, \beta$ are linearly independent, the two matrices are non-degenerate, given that $-1$ is not a square in $\mathbb{F}$. Hence for each $q$, there is a unique $p$, satisfying these equations and vice versa.

Besides, if, say $u_2=0$, then $u_1\neq0$, and the first two equations (\ref{syst}) become
$$
u_1(b\cdot p-\beta \cdot q) =1,\qquad   a\cdot p + \alpha\cdot q =-\frac{u_3}{u_1}.
$$
Hence in both cases, for each variable $q$, there is a unique $p$, satisfying equations (\ref{syst}) and vice versa.

We conclude that whenever $\mathbb{F}$ is such that $-1$ is not a square, that is whenever it is meaningful to speak of positive definite matrices,
Condition (ii) of Theorem \ref{GKT} is satisfied. To be fair, the above consideration has not yet dealt with the special case of co-planarity in the plane through the origin.  (See the discussion in Section \ref{Bg} concerning $\alpha$-and $\beta$-planes.) The latter case fixes the values of $a\cdot p-\alpha\cdot q$ and  $b\cdot p- \beta\cdot q$. The same conclusion follows  by linear independence of $a$ and $b$, as well as  $\alpha$ and $\beta$.
\end{proof}

\subsubsection{Signature $(1,1)$ metric case}
In this case the Pl\"ucker vector $L_{pq}$, assigned to $(p,q)$ via (\ref{map}) is

\begin{equation}\label{mapm}
[a\cdot p+\alpha\cdot q:b\cdot p+ \beta\cdot q:1:a\cdot p-\alpha\cdot q:-b\cdot p+ \beta\cdot q: -(a\cdot p)^2 + (b\cdot p)^2 + (\alpha\cdot q)^2 - (\beta\cdot q)^2].
\end{equation}

\begin{lemma}\label{lemmink}
Conditions (i), (ii)  of Theorem \ref{GKT} may fail at certain points, as well as in certain planes. However, lines $l_{pq}$ and $l_{p'q'}$ are concurrent at such a point or coplanar in such a plane if and only if $$(p-p')^T M_1 (p-p') = (q-q')^T M_2 (q-q')=0.$$\end{lemma}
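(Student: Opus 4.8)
The plan is to mirror, step by step, the argument of Lemma \ref{lemerd}, but to keep careful track of the places where the relevant $2\times 2$ matrices degenerate, since that is exactly what does not happen in the positive definite case and what must happen here because the forms have signature $(1,1)$. Concretely, for the Pl\"ucker vector \eqref{mapm} I would write $\om = (a\cdot p+\alpha\cdot q,\; b\cdot p+\beta\cdot q,\; 1)$ and the corresponding $\ve$, and impose the concurrency relation $\ve = \ur\times\om$ at a finite point $\ur=(u_1,u_2,u_3)\in\mathbb F^3$. As in the proof of Lemma \ref{lemerd} this yields two scalar equations, linear in $p$ and in $q$, whose coefficient matrices are now (schematically) $\binom{u_3a^T - b^T}{-u_3b^T + a^T}$ in $p$ and $\binom{u_3\alpha^T + \beta^T}{u_3\beta^T - \alpha^T}$ in $q$ — note the sign changes coming from the fifth Pl\"ucker coordinate $-b\cdot p+\beta\cdot q$. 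The key computation is the determinant of such a matrix: since $a,b$ are linearly independent, $\det\binom{u_3a^T-b^T}{a^T-u_3b^T}$ is a nonzero scalar multiple of $(u_3^2-1)$, hence it vanishes precisely when $u_3=\pm 1$. So Condition (i) can only fail along the two planes $u_3=1$ and $u_3=-1$, and similarly for $q$.

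Next I would extract what failure of (i) says about the plane problem. On the bad planes $u_3=\pm1$, the $p$-matrix has a one-dimensional kernel, and several lines $l_{pq}$ meet at $\ur$ only if $p-p'$ lies in that kernel for the $\pm$ sign in question; by Lemma \ref{iso} this kernel direction is exactly an isotropic direction of $M_1$, i.e. $(p-p')^TM_1(p-p')=0$, and by the same token $(q-q')^TM_2(q-q')=0$ for the matching sign. (Here one uses that the reciprocal product of $L_{pq}$ with itself is $-(a\cdot p)^2+(b\cdot p)^2+(\alpha\cdot q)^2-(\beta\cdot q)^2$, which is $(p)^TM_1 p-(q)^TM_2 q$ up to the evident sign, so a concurrency forced by an isotropic difference sits on the diagonal of \eqref{matricesmore} with both sides zero.) The coplanarity condition (ii) is handled by the same mechanism applied to $\om=\ur\times\ve$ instead of $\ve=\ur\times\om$: one gets the analogous linear systems in $p,q$ with coefficient matrices whose determinants are again nonzero multiples of $u_3^2-1$ (or of $u_1^2+u_2^2$-type expressions in the degenerate subcases $u_1=0$ or $u_2=0$, which one treats separately exactly as in Lemma \ref{lemerd}), so (ii) can only fail in a bounded family of planes, and when it does the difference vectors are again isotropic for $M_1$ and $M_2$. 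One must also dispose of the special cases of concurrency/coplanarity at infinity and in planes through the origin; as in Lemma \ref{lemerd} these fix the values of two linear forms and the conclusion follows from linear independence of $a,b$ and of $\alpha,\beta$, so they contribute no new bad loci.

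I would then assemble these observations into the stated equivalence: if $l_{pq}$ and $l_{p'q'}$ are concurrent at a point where (i) fails, or coplanar in a plane where (ii) fails, then by the kernel analysis above $p-p'$ is isotropic for $M_1$ and $q-q'$ is isotropic for $M_2$, and since the general intersection equation \eqref{matricesmore} reads $(p-p')^TM_1(p-p')=(q-q')^TM_2(q-q')$ in this separated case, both sides equal $0$; conversely, if $(p-p')^TM_1(p-p')=(q-q')^TM_2(q-q')=0$ then the two lines meet and one checks, again via Lemma \ref{iso}, that the only directions in $\mathbb F^2$ making the left side vanish are the two isotropic ones, which is precisely where the degeneracy $u_3=\pm1$ was located, so the intersection point lies on one of the bad planes. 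The main obstacle, such as it is, is bookkeeping: keeping the $\pm$ signs from Lemma \ref{iso} matched correctly between the $M_1$-side and the $M_2$-side across the two bad planes $u_3=\pm1$, and making sure the genuinely degenerate subcases of the coplanarity system ($u_1=0$ or $u_2=0$) are shown to impose no extra failures — neither is deep, but both need to be written out with care so that the ``if and only if'' is actually bidirectional.
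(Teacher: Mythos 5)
Your strategy is the same as the paper's: impose $\ve=\ur\times\om$ (resp.\ $\om=\ur\times\ve$) for the Pl\"ucker data \eqref{mapm}, locate the values of $\ur$ where the $2\times 2$ coefficient matrices of $p$ and of $q$ degenerate, show that concurrency/coplanarity of $l_{pq}$ and $l_{p'q'}$ at such a locus forces $(a\pm b)\cdot(p-p')=(\alpha\pm\beta)\cdot(q-q')=0$ and hence, by Lemma \ref{iso}, the vanishing of both quadratic forms, and argue the converse by the same construction; the loci at infinity and the planes through the origin are dismissed by linear independence as in Lemma \ref{lemerd}. In the concurrency half your sign bookkeeping differs from the paper's (the correct matrices have rows $u_3a^T+b^T$, $u_3b^T+a^T$ and $u_3\alpha^T-\beta^T$, $u_3\beta^T-\alpha^T$), but this is harmless: either way the determinant is a nonzero multiple of $u_3^2-1$, so degeneracy occurs exactly at $u_3=\pm1$, and the kernel analysis goes through.

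The coplanarity half, however, contains a concrete error: the relevant determinants are \emph{not} ``again nonzero multiples of $u_3^2-1$''. After eliminating $v_3$ from the system $\om=\ur\times\ve$ (using the equation $u_1(-b\cdot p+\beta\cdot q)-u_2(a\cdot p-\alpha\cdot q)=1$), the coefficient matrices of $p$ and $q$ have rows $u_1b^T+u_2a^T$, $u_1a^T+u_2b^T$ and $-u_1\beta^T-u_2\alpha^T$, $u_1\alpha^T+u_2\beta^T$, whose determinants are nonzero multiples of $u_1^2-u_2^2$. Thus the exceptional planes $\ur\cdot\qu=-1$ are those with $u_1=\pm u_2\neq 0$, independently of $u_3$, and the subcases $u_1=0$ or $u_2=0$ produce \emph{no} failure at all (they pin down the values of two independent linear forms, exactly as in the positive definite case), rather than contributing ``$u_1^2+u_2^2$-type'' degeneracies. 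The lemma's conclusion is unaffected, because at $u_1=\pm u_2$ the two surviving equations read $(a\pm b)\cdot p-(\alpha\pm\beta)\cdot q=1/u_1$ and $(a\pm b)\cdot p+(\alpha\pm\beta)\cdot q=-u_3/u_1$, so coplanar pairs again satisfy $(a\pm b)\cdot(p-p')=(\alpha\pm\beta)\cdot(q-q')=0$ and Lemma \ref{iso} applies; but as written your identification of the bad planes is wrong, and the ``only if'' direction for condition (ii) needs to be redone with the correct degeneracy condition $u_1=\pm u_2$.
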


\begin{proof}
Since $a$ is not a multiple of $b$ and $\alpha$ is not a multiple of $\beta$, the linear forms $L_1,\ldots, L_4$ are linearly independent. We verify conditions (i), (ii) of Theorem  \ref{GKT} only in the case of ``generic'' $\alpha$- and $\beta$-planes as described in Section \ref{Bg}. The special case of concurrency at infinity or co-planarity in a plane $\ur\cdot \qu=0$ through the origin in $\mathbb{F}^3$ follows as in the previous lemma.

Let $\ur=(u_1,u_2,u_3)$, $\om=(a\cdot p+\alpha\cdot q, b\cdot p+ \beta\cdot q,1)$, and
$$\ve =(a\cdot p-\alpha\cdot q,-b\cdot p+ \beta\cdot q, -(a\cdot p)^2 + (b\cdot p)^2 + (\alpha\cdot q)^2 - (\beta\cdot q)^2).$$
Suppose, $\ve = \ur\times\om$. This means,
\begin{equation}\label{check}\begin{aligned}
u_1 &= u_3(a\cdot p+\alpha\cdot q) + b\cdot p - \beta\cdot q,\\
u_2 &= u_3(b\cdot p+\beta\cdot q) + a\cdot p - \alpha\cdot q.\end{aligned}
\end{equation}
The matrices multiplying $p$ and $q$ are, respectively, $$\left(
                                                           \begin{array}{c}
                                                             u_3a^T + b^T \\
                                                             u_3b^T + a^T \\
                                                           \end{array}
                                                         \right), \qquad \left(
                                                           \begin{array}{c}
                                                             u_3\alpha^T - \beta^T \\
                                                             u_3\beta^T - \alpha^T \\
                                                           \end{array}
                                                         \right).$$ These matrices are degenerate if and only if $u_3=\pm 1$. Otherwise, for each $q$, there is a unique $p$, satisfying the concurrency equations and vice versa, i.e. unless $u_3=\pm 1$,  Condition (i)  of Theorem \ref{GKT} is satisfied at $\ur$. 
                                                         
  If $u_3=\pm1$, the equations (\ref{check}) become
$$\begin{aligned}
u_1 &=  (b \pm a)\cdot p + (\pm \alpha-\beta)\cdot q,\\
u_2 &= (a \pm b )\cdot p+(\pm \beta - \alpha)\cdot q.\end{aligned}
$$
Suppose, lines $l_{pq}$ and $l_{p'q'}$ both find themselves concurrent  at such a point $(u_1,u_2,\pm1)$. It follows that
$$(a \pm b) \cdot (p-p') = (\alpha \pm \beta)(q-q') = 0.$$
Thus, by Lemma \ref{iso} the Minkowski distances between $p,p'$, as well as $q,q'$ are zero. The converse is also true by construction and Lemma \ref{iso}: if the latter equation is satisfied, the lines $l_{pq},l_{p'q'}$ are concurrent at a point with $u_3=\pm1$.

Let us check the coplanarity condition (ii) of Theorem \ref{GKT}.
Suppose now, $\om = \ur\times\ve$. This means,
\begin{equation}\label{systm}\begin{array}{ccc}
u_1(-b\cdot p+\beta \cdot q) - u_2(a\cdot p-\alpha\cdot q) &= &1,\\
u_2 v_3 - u_3 (-b\cdot p+\beta \cdot q)  &=&  a\cdot p + \alpha\cdot q,\\
-u_1v_3 + u_3 (a\cdot p-\alpha\cdot q) & =& b\cdot p+\beta \cdot q.\end{array}
\end{equation}
If, say $u_2=0$, then $u_1\neq 0$, and the first two equations (\ref{syst}) become
$$
u_1(-b\cdot p+\beta \cdot q) =1,\qquad a\cdot p + \alpha\cdot q =-\frac{u_3}{u_1}.
$$
Since the pairs of vectors $a,b$ and $\alpha,\beta$ are linearly independent, the coplanarity condition in such a  plane $\ur\cdot \qu = -1$ is satisfied.

Suppose now, both $u_1,u_2\neq 0$. Then we basically copy the discussion as to the concurrency case. Eliminating the term with $v_3$ from the last two equations and using the first one
\begin{equation}\label{sys1}\begin{array}{cc}
-u_1(-b\cdot p+\beta \cdot q) + u_2(a\cdot p-\alpha\cdot q) &= -1,\\
u_1(a\cdot p + \alpha\cdot q)+ u_2(b\cdot p+\beta \cdot q)&=-u_3.\end{array}
\end{equation}
The matrices multiplying $p$ and $q$ are, respectively,
$$ \left(
\begin{array}{cc}u_1b^T + u_2a^T \\
u_1a^T  + u_2b^T
\end{array}
\right), \qquad \left(
\begin{array}{cc} - u_1\beta^T - u_2\alpha^T \\
u_1\alpha^T  + u_2\beta^T
\end{array}
\right).
$$
Hence, the coplanarity condition of Theorem \ref{GKT}  may be violated in  the plane $\ur\cdot \qu = -1$ if only if $u_1=\pm u_2$.

If $u_1 =  \pm u_2\neq 0$ equation \eqref{sys1} become
$$\begin{array}{cc}
(a\pm b) \cdot p - (\alpha\pm\beta) \cdot q &= \frac{1}{u_1},\\
(a\pm b) \cdot p + (\alpha\pm\beta) \cdot q&=-\frac{u_3}{u_1}.\end{array}
$$
Suppose, lines $l_{pq}$ and $l_{p'q'}$ both find themselves in such exceptional plane. It follows that
$$(a\pm b) \cdot (p-p') = (\alpha\pm\beta)(q-q') = 0.$$
Thus, by Lemma \ref{iso} the Minkowski distances between $p,p'$, as well as $q,q'$ are zero. The converse is also true by construction and Lemma \ref{iso}: if the latter equation is satisfied, the lines $l_{pq},l_{p'q'}$ are coplanar in an exceptional plane as above.

\end{proof}

\subsubsection{Degenerate case}
In this case the Pl\"ucker vector, assigned to $(p,q)$ via (\ref{map}) is
\begin{equation}\label{mapd}
L_{pq}= [a\cdot p: \beta\cdot q:1: c\cdot p:\delta\cdot q: -(a\cdot p)(c\cdot p) - (\beta\cdot q)(\delta\cdot q)].
\end{equation}

\begin{lemma}\label{lemdeg}
This case satisfies conditions (i), (ii) of Theorem \ref{GKT}, but for some special points and planes. However, the lines $l_{pq}$ and $l_{p'q'}$ are concurrent at such a point and co-planar in such a plane if and only if $p-p'$ is in a kernel\footnote{We say {\em a} kernel, since it may be the left or right one. Which one -- can bee seen within the proof.} of $M_1$ and $q-q'$ is in a kernel of $M_2$.\end{lemma}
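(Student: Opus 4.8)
The plan is to follow the template of Lemmas~\ref{lemerd} and \ref{lemmink}, now working with the Pl\"ucker vector \eqref{mapd}. Write $\om=(a\cdot p,\,\beta\cdot q,\,1)$ and $\ve=(c\cdot p,\,\delta\cdot q,\,-(a\cdot p)(c\cdot p)-(\beta\cdot q)(\delta\cdot q))$, so that $L_{pq}=(\om:\ve)$; note that $\om\cdot\ve=0$ holds identically, as it must. Since $a$ is not a multiple of $c$ and $\beta$ is not a multiple of $\delta$, and the four linear forms $a\cdot p,\ c\cdot p,\ \beta\cdot q,\ \delta\cdot q$ involve disjoint sets of variables, they are linearly independent; hence $\mathfrak F$ is injective and, by Lemma~\ref{reguli}, Condition (iii) of Theorem~\ref{GKT} holds automatically. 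It remains to examine Conditions (i) and (ii).

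For concurrency at a finite point $\ur=(u_1,u_2,u_3)$ one imposes $\ve=\ur\times\om$. The first two coordinates of this vector identity read $c\cdot p+u_3(\beta\cdot q)=u_2$ and $u_3(a\cdot p)-\delta\cdot q=u_1$, while the third coordinate is, as a short computation shows, an automatic consequence of these two together with $\om\cdot\ve=0$. These two equations \emph{decouple}: the first constrains only $c\cdot p$ and $\beta\cdot q$, the second only $a\cdot p$ and $\delta\cdot q$, with $u_3$ the coupling coefficient. Thus, when $u_3\neq 0$, fixing $p$ determines both $\beta\cdot q$ and $\delta\cdot q$, hence $q$; so at most one line $l_{pq}$ per $p$ passes through $\ur$, and Condition (i) holds there. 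The finite exceptional points are exactly those with $u_3=0$, where the system degenerates to $c\cdot p=u_2$, $\delta\cdot q=-u_1$, two conditions that no longer link $p$ and $q$; consequently two lines $l_{pq},l_{p'q'}$ are concurrent at such a point if and only if $c\cdot(p-p')=0$ and $\delta\cdot(q-q')=0$, i.e.\ $p-p'$ lies in the right kernel of $M_1=ac^T$ and $q-q'$ in the right kernel of $M_2=-\beta\delta^T$. The complementary special case of concurrency at infinity fixes the direction $\om=(a\cdot p,\beta\cdot q,1)$ projectively, hence fixes $a\cdot p$ and $\beta\cdot q$; it is exceptional, and two lines share such an ideal point precisely when $a\cdot(p-p')=0$ and $\beta\cdot(q-q')=0$, i.e.\ $p-p'$ is in the left kernel of $M_1$ and $q-q'$ in the left kernel of $M_2$.

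The coplanarity analysis is analogous, except that $\ve$ now carries the quadratic entry. Writing $\om=\ur\times\ve$ for the generic $\beta$-plane with plane $\ur\cdot\qu=-1$, the third coordinate gives $u_1(\delta\cdot q)-u_2(c\cdot p)=1$, and the combination $u_1\cdot(\text{first coordinate})+u_2\cdot(\text{second coordinate})$ eliminates the quadratic term and yields $u_1(a\cdot p)+u_2(\beta\cdot q)=-u_3$; again a short computation shows the remaining coordinate is implied by these two and $\om\cdot\ve=0$. If $u_1,u_2\neq 0$ these two linear equations determine $q$ once $p$ is fixed, and vice versa, so Condition (ii) holds in such a plane. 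The exceptional planes are those with $u_1=0$ (forcing $c\cdot p$ constant and, separately, $\beta\cdot q$ constant) and those with $u_2=0$ (forcing $a\cdot p$ and $\delta\cdot q$ constant); in the first case two coplanar lines satisfy $c\cdot(p-p')=0$ and $\beta\cdot(q-q')=0$, i.e.\ $p-p'$ in the right kernel of $M_1$ and $q-q'$ in the left kernel of $M_2$, and in the second case $a\cdot(p-p')=0$ and $\delta\cdot(q-q')=0$, i.e.\ $p-p'$ in the left kernel of $M_1$ and $q-q'$ in the right kernel of $M_2$. The remaining special case of coplanarity in a plane through the origin is dispatched exactly as in Lemmas~\ref{lemerd}--\ref{lemmink} and leads to the same type of conclusion, so that a condition-violating concurrency or coplanarity always forces $p-p'$ into one of the two kernels of $M_1$ and $q-q'$ into one of the two kernels of $M_2$.

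For the converse, observe that in \eqref{dset} one has $M_3=0$, so by \eqref{matricesmore} the reciprocal product of $L_{pq}$ and $L_{p'q'}$ equals $(a\cdot(p-p'))(c\cdot(p-p'))+(\beta\cdot(q-q'))(\delta\cdot(q-q'))$; if $p-p'$ lies in either kernel of $M_1$ and $q-q'$ in either kernel of $M_2$, this vanishes, so $l_{pq}$ and $l_{p'q'}$ meet, and running through the four kernel combinations one checks that the common point, respectively the spanning plane, is exactly one of the exceptional loci found above (right/right landing at a finite point with $u_3=0$, left/left at infinity, and the two mixed combinations in the two types of exceptional plane). The only genuine subtlety in the whole argument is the bookkeeping of \emph{which} kernel, left or right, is forced in each subcase — as flagged in the footnote to the statement — together with the routine check that the two extracted linear equations, in the presence of the identity $\om\cdot\ve=0$, are equivalent to the full concurrency, respectively coplanarity, system. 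There is no analytic difficulty: the computations are the usual brute-force ones.
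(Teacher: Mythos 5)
Your proof is correct and follows essentially the same route as the paper: the same case analysis on $\ve=\ur\times\om$ (exceptional finite points with $u_3=0$, concurrency at infinity) and on $\om=\ur\times\ve$ (planes through the origin, exceptional planes with $u_1=0$ or $u_2=0$), leading to the same left/right kernel identifications. Your extra remarks — that the third coordinate equation is redundant given the other two and $\om\cdot\ve=0$, and the explicit converse via the vanishing reciprocal product — are correct additions that the paper leaves implicit.
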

\begin{proof}
Since $a$ is not a multiple of $c$ and $\beta$ is not a multiple of $\delta$, the linear forms $L_1,\ldots, L_4$ are linearly independent.

Let us check the concurrency condition (i).

Let $\ur=(u_1,u_2,u_3)$, $\om=(a\cdot p,\beta\cdot q,1)$, and
$$\ve =(c\cdot p, \delta\cdot q,  -(a\cdot p)(c\cdot p) - (\beta\cdot q)(\delta\cdot q)).$$
Suppose, we are dealing with the generic concurrency case, that is $\ve = \ur\times\om$. This means,
$$\begin{aligned}
u_2 - u_3 \beta\cdot q &= c\cdot p,\\
u_3 a\cdot p - u_1 &= \delta\cdot q.\end{aligned}
$$
There is a unique solution $p$ for every $q$, and vice versa, except when $u_3=0$. In the latter case, given $(u_1,u_2)$, for every $(p,q)$ such that $c\cdot p=u_2,\; \delta\cdot q=-u_1,$ the lines $l_{pq}$ can be concurrent at $(u_1,u_2,0)$. If $l_{pq}$ and $l_{p'q'}$ are concurrent at such point, then $c\cdot (p-p') = \delta\cdot (q-q')=0$, that is the vector $p-p'$ is in the right kernel of $M_1$ and $q-q'$ is in the right kernel of $M_2$. In the case of concurrency at infinity, we fix the values of $a\cdot p$ and $\beta\cdot q$ and therefore come to the same conclusion, only $p-p'$ is now in the left kernel of $M_1$ and $q-q'$  in the left kernel of $M_2$.

Let us check the coplanarity condition (ii). Dealing with the special case of planes $\ur\cdot \qu=0$ through the origin in $\mathbb{F}^3$, we fix the values of $c\cdot p$ and $\delta\cdot q$. Two lines $l_{pq}$ and $l_{p'q'}$ are coplanar in such a plane if and only if $c\cdot (p-p') = \delta\cdot (q-q')=0$, that is $p-p'$ is in the right kernel of $M_1$ and $q-q'$ is in the right kernel of $M_2$. 

In the generic case of planes $\ur\cdot\qu=-1$, suppose $\om = \ur\times\ve.$ This means
$$\begin{aligned}
u_2v_3 - u_3\delta\cdot q &= a\cdot p, \\
u_3c\cdot p - u_1v_3&=\beta\cdot q,\\
u_1 \delta\cdot q - u_2 c\cdot p&=1. \end{aligned}
$$
Suppose $u_1=0$, $u_2\neq 0$. The equations become $c\cdot p = -\frac{1}{u_2}$, $\beta\cdot q = -\frac{u_3}{u_2}$, which means Condition (ii) may fail in the plane
with the equation $u_2x_2+u_3x_3 = -1, \;u_2\neq 0$, and every line $l_{pq}$, such that $c\cdot p = -\frac{1}{u_2}$, $\beta\cdot q = -\frac{u_3}{u_2}$ lies in this plane.

If $l_{pq}$ and $l_{p'q'}$ are coplanar in such plane, then $c\cdot (p-p') = \beta\cdot (q-q')=0$, that is  $p-p'$ is in the right kernel of $M_1$ and $q-q'$ is in the left kernel of $M_2$.

Similarly, we may have exceptional planes with $u_1\neq 0$, $u_2= 0$, in which case lines $l_{pq}$ and $l_{p'q'}$ are coplanar in such plane, if and only if  $a\cdot (p-p') = \delta\cdot (q-q')=0$, that is  $p-p'$ is in the left kernel of $M_1$ and $q-q'$ is in the right kernel of $M_2$.

If both $u_1,u_2\neq 0$ we get

$$\begin{aligned}
u_1a\cdot p+u_2\beta\cdot q + u_1u_3\delta\cdot q- u_2u_3c\cdot p &=0, \\
u_1 \delta\cdot q - u_2 c\cdot p&=1.\end{aligned}
$$
The matrices, multiplying $p$ and $q$ are, respectively
$$\left(\begin{array}{c} u_1a^T -u_2u_3c^T \\ -u_2 c^T\end{array}\right), \qquad  \left(\begin{array}{c} u_2\beta^T +u_1u_3\delta^T \\ u_1 \delta^T\end{array}\right),$$  and are both non-singular, which means, the coplanarity condition of Theorem \ref{GKT} is satisfied.
\end{proof}

\subsubsection{Directions case}
In this final case we deal with Pl\"ucker vectors as follows:
\begin{equation}\label{mapdir}
L_{pq}= [a\cdot p:b\cdot p:1:\lambda b_1q_1+b_2q_2: -\lambda a_1q_1-a_2q_2: (p_1q_2-\lambda p_2q_1)(a_2b_1-a_1b_2)].
\end{equation}

\begin{lemma}\label{lemdir}
The concurrency condition (i) of Theorem \ref{GKT} is satisfied. The co-planarity condition (ii) is satisfied if and only if the point set $S$ has $O(\sqrt{N})$ points on any straight line.\end{lemma}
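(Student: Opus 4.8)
The plan is to analyze the Pl\"ucker vector \eqref{mapdir} in exactly the same style as the previous three lemmas, treating concurrency ($\alpha$-planes) and co-planarity ($\beta$-planes) separately, but with attention to the fact that the sixth Pl\"ucker coordinate here is genuinely bilinear in $p,q$ (not a sum of a $p$-only and a $q$-only term as in the degenerate case), and that the first three coordinates $\om = (a\cdot p, b\cdot p, 1)$ depend on $p$ alone. First I would verify linear independence of $L_1 = a\cdot p$, $L_2 = b\cdot p$, $L_3 = \lambda b_1 q_1 + b_2 q_2$, $L_4 = -\lambda a_1 q_1 - a_2 q_2$: since $a_2 b_1 - a_1 b_2 \neq 0$, the pair $a,b$ is a basis of $\mathbb F^2$, so $L_1,L_2$ span all linear forms in $p$, and likewise $L_3,L_4$ span all linear forms in $q$ (the coefficient matrix $\binom{\lambda b_1\ \ b_2}{-\lambda a_1\ -a_2}$ has determinant $-\lambda(b_1 a_2 - a_1 b_2) \neq 0$); hence $L_1,\dots,L_4$ are independent and $\mathfrak F$ is injective, so Lemma \ref{reguli} gives condition (iii) for free.

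For concurrency, I would set $\ur = (u_1,u_2,u_3)$, $\om = (a\cdot p, b\cdot p, 1)$ and impose $\ve = \ur\times\om$. Because $\om$ involves only $p$, the first two components of $\ur\times\om$ are linear in $p$ with no $q$; equating them to $L_3,L_4$ (which are linear in $q$) yields a system of the form $(\text{matrix in }u_3)\,p = (\text{matrix})\,q$, and I expect the matrix on the $p$-side to be non-degenerate for all values of $\ur$ (the telltale obstruction $u_3 = \pm 1$ from the signature-$(1,1)$ case should not reappear here because the structural asymmetry between $\om$ and $\ve$ is different), so for each $q$ there is a unique $p$ and condition (i) holds; the concurrency-at-infinity case fixes $a\cdot p$ and $b\cdot p$, hence $p$, giving the same conclusion. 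The third component of $\ur \times \om$ must also match the bilinear sixth coordinate, but this is an automatic consequence once $\ur$ lies on the line, so it imposes nothing extra.

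The real content, and the expected main obstacle, is the co-planarity condition (ii), where the ``if and only if'' with the combinatorial hypothesis on $S$ must emerge. Here I would impose $\om = \ur\times\ve$ and work through the resulting three scalar equations as in Lemma \ref{lemmink} and Lemma \ref{lemdeg}. For generic $\ur$ one expects unique solvability of $p$ in terms of $q$ and vice versa, hence no violation; the violation should occur only for special $\ur$ (I anticipate $u_3 = 0$, i.e.\ planes $u_1 x_1 + u_2 x_2 = -1$ not containing the $x_3$-direction, or a similar codimension-one locus), and in that exceptional case the equations should collapse to conditions of the form ``$a\cdot p$ fixed'' and ``$b\cdot p$ fixed'' together with one relation, forcing $p$ to lie on a fixed affine line in $\mathbb F^2$ while $q$ ranges freely over a line. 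Thus all the lines $l_{pq}$ with $p$ on that fixed line and $q$ on the corresponding line become coplanar, and the number of such lines is (number of points of $S$ on the first line) $\times$ (number of points of $S$ on the second line). Therefore condition (ii) of Theorem \ref{GKT} — at most $O(N)$ lines per plane — holds precisely when every straight line carries $O(\sqrt N)$ points of $S$: if some line held $\gg\sqrt N$ points, the rich plane would contain $\gg N$ lines $l_{pq}$, and conversely the $O(\sqrt N)$ bound makes every plane $O(N)$-poor. I would close by observing that the exceptional planes correspond exactly to the ``single very rich line'' obstruction flagged before the statement of Theorem \ref{directions}, so the hypothesis is not merely sufficient but necessary, completing the proof.
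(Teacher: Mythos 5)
Your overall strategy is the paper's: impose $\ve=\ur\times\om$ for concurrency and $\om=\ur\times\ve$ for coplanarity on the Pl\"ucker vector \eqref{mapdir}, show that violations of (ii) confine $p$ and $q$ to a pair of fixed lines in the plane, and count. But two of your concrete claims need repair. In the concurrency system the first two components of $\ur\times\om$ read
\begin{equation*}
u_2-u_3\,b\cdot p=\lambda b_1q_1+b_2q_2,\qquad u_3\,a\cdot p-u_1=-\lambda a_1q_1-a_2q_2,
\end{equation*}
so the matrix acting on $p$ is $u_3$ times a fixed invertible matrix and degenerates on the whole plane $u_3=0$; your assertion that the $p$-side matrix is non-degenerate for every $\ur$ is false, and ``for each $q$ a unique $p$'' fails there. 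What actually saves condition (i) -- and is what the paper uses -- is the other matrix: the one acting on $q$ has determinant $\pm\lambda(a_2b_1-a_1b_2)\neq0$ \emph{independently} of $\ur$, so for each fixed $p$ there is at most one $q$, hence at most $N$ lines through any point, which is all that (i) requires (the same holds at $u_3=0$, where $q$ is simply pinned down).

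For coplanarity your anticipated exceptional locus is too small and your description of it is off in a way that matters for the count. Working through $\om=\ur\times\ve$ one finds three exceptional families, not one: $u_1=0$ (where $b\cdot p$ and $\lambda b_1q_1+b_2q_2$ are fixed), $u_2=0$ (where $a\cdot p$ and $\lambda a_1q_1+a_2q_2$ are fixed), and $u_3=0$ with $u_1,u_2\neq 0$ (where $(u_1a+u_2b)\cdot p=0$ and $\lambda(u_1a_1+u_2b_1)q_1+(u_1a_2+u_2b_2)q_2=-1$); for all other $\ur$ both $p$ and $q$ are multiplied by non-singular matrices and (ii) holds. In each exceptional family exactly \emph{one} linear form in $p$ and \emph{one} in $q$ is pinned; your phrase ``$a\cdot p$ fixed and $b\cdot p$ fixed'' would fix $p$ to a single point (since $a,b$ are independent), which would give only $O(N)$ lines in such a plane and would contradict the ``only if'' part of the statement. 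You also leave out the special $\beta$-planes through the origin, where the values of $L_3,L_4$ are fixed, hence $q$ is fixed and (ii) holds trivially. With these corrections your counting step -- the lines in an exceptional plane are indexed by (points of $S$ on the $p$-line) times (points of $S$ on the $q$-line), so the $O(\sqrt N)$ hypothesis gives $O(N)$ per plane -- coincides with the paper's; and your converse direction is stated with the same looseness as the paper's own conclusion (a rich line produces a rich plane only when the partner line also carries enough points), so on that point you are no worse off than the original.
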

\begin{proof}
Since $a$ is not a multiple of $b$ and $\lambda, a_2b_1-a_1b_2\neq 0$, the linear forms $L_1,\ldots, L_4$ are linearly independent. Moreover, the concurrency condition at infinity fixes the values of $a\cdot p$
and $b\cdot p$ and is therefore satisfied. 

Let us check the concurrency condition (i) at a point in $\ur \in \mathbb F^3$.

Let $\ur=(u_1,u_2,u_3)$, $\om=(a\cdot p, b\cdot p,1)$, and
$$\ve =(\lambda b_1q_1+b_2q_2, -\lambda a_1q_1-a_2q_2, (p_1q_2-\lambda p_2q_1)(a_2b_1-a_1b_2)).$$
Suppose, $\ve = \ur\times\om$. This means,
$$\begin{aligned}
u_2 - u_3 b\cdot p &= \lambda b_1q_1+b_2q_2,\\
u_3 a\cdot p - u_1 &= -\lambda a_1q_1-a_2q_2.\end{aligned}
$$
The matrix multiplying  $q$ is non-degenerate. Thus Condition (i) of Theorem \ref{GKT}  is satisfied:  given $\ur$ and $p$ there is a unique $q$ satisfying these equations.

Let us check the coplanarity condition (ii). Dealing with planes $\ur\cdot \qu=0$ through the origin in $\mathbb{F}^3$ means fixing  the values $\lambda b_1q_1+b_2q_2$ and $ -\lambda a_1q_1-a_2q_2$, that is fixes $q$. Thus Condition (ii) is satisfied in these planes.

Suppose $\om = \ur\times\ve.$ This means
$$\begin{aligned}
u_2v_3 + u_3(\lambda a_1q_1+a_2q_2) &= a\cdot p,\\
u_3(\lambda b_1q_1+b_2q_2) - u_1v_3&=b\cdot p,\\
-u_1 (\lambda a_1q_1+a_2q_2) - u_2 (\lambda b_1q_1+b_2q_2)&=1. \end{aligned}
$$
Suppose $u_1=0$, $u_2\neq 0$. The equations become $\lambda b_1q_1+b_2q_2 = -\frac{1}{u_2}$, $b\cdot p = -\frac{u_3}{u_2}$, which means Condition (ii) fails in the plane
with the equation $u_2x_2+u_3x_3 = -1, \;u_2\neq 0$, and every line $l_{pq}$, such that $b\cdot p = -\frac{u_3}{u_2}$, $\lambda b_1q_1+b_2q_2 = -\frac{1}{u_2}$ lies in this plane.
The number of such lines $l_{pq}$ will be $O(N)$ if the point set $S$ has $O(\sqrt{N})$ points on each line in the corresponding two families of parallel lines.

Similarly, one deals with the case $u_1=0$, $u_2\neq 0$.

If both $u_1,u_2\neq 0$ we get

$$\begin{aligned}
u_1u_3(\lambda a_1q_1+a_2q_2)-u_1a\cdot p+u_2u_3(\lambda b_1q_1+b_2q_2)-u_2b\cdot p&=0,\\
-u_1 (\lambda a_1q_1+a_2q_2) - u_2 (\lambda b_1q_1+b_2q_2)&=1.\end{aligned}
$$
If $u_3=0$, Condition (ii) fails and any line $l_{pq}$, such that $(u_1a+u_2b)\cdot p=0$ as well as $q_1\lambda(u_1a_1+u_2b_1) + q_2(u_1a_2+ u_2b_2)=-1$. Hence we conclude that Condition (ii) will be satisfied if and only if the point set $S$ has $O(\sqrt{N})$ points on {\em any} straight line.

We finally note that if none of the $u_1,u_2,u_3$ is zero, Condition (ii) is satisfied, for then both $p$ and $q$ in the latter set of two equations are multiplied by non-degenerate matrices.
\end{proof}
\subsection{Conclusion of proofs of Theorems \ref{metr}-\ref{directions}}
Theorem \ref{metr} in the positive definite case and Theorem \ref{directions} follow immediately by Theorem \ref{GKT}, since the families of lines $\{l_{pq}\}$ in $\R^3$, defined via the map \eqref{map} as
\eqref{mape} and \eqref{mapdir} satisfy all its conditions, by Lemmas \ref{reguli}, \ref{lemerd}, and \ref{lemdir}.

As for Theorem \ref{metr} in the signature $(1,1)$ case, as well as Theorem \ref{degmetr} we act as follows. One can choose two subsets $S_1$ and $S_2$ of $S$, with, say at least $\frac{N}{16}$ elements each, such that
for any $(p,p')\in S_1\times S_2$, neither $(p-p')^TM_1(p-p')$, nor $(p-p')^TM_2(p-p')$ equals zero. Define two families of lines
$$L_j= \{l_{pq}\}_{p,q\in S_j},\qquad j=1,2.$$

Apply Theorem \ref{GKTP}, whose conditions are satisfied by Lemmas \ref{lemmink}, \ref{lemdeg}, respectively as to the Minkowski/degenerate cases. It follows that the equation
$$
(p-p')^T M_1 (p-p') = (q-q')^T M_2 (q-q'), \qquad (p,q)\in S_1\times S_1 ,\;(p',q')\in S_2 \times S_2
$$
has $O(N^3\log N)$ solutions. Note that the values of the matrix products involved, by the assumptions on $S_1$, $S_2$ are nonzero.

One can make $O(1)$ choices of the pair of positive proportion subsets $(S^i_1,S^i_2)$ of $S$, with $i=1,\ldots, K=O(1)$, such that whenever
$$
(p-p')^T M_1 (p-p') = (q-q')^T M_2 (q-q')\neq 0, \qquad (p,q)\in S\times S,\;(p',q')\in S\times S,
$$
then for some $i=1,\ldots, K$,
$$
(p-p')^T M_1 (p-p') = (q-q')^T M_2 (q-q'), \qquad (p,q)\in S^i_1\times S^i_1 ,\;(p',q')\in S^i_2 \times S^i_2.
$$
But for the pair of sets $S_1^i$, $S_2^i$ Theorem \ref{GKTP} applies as above, and $K=O(1)$. This completes the proof of Theorems \ref{metr}, \ref{degmetr}. \qed

\section*{Appendix. Derivation of \eqref{rpz} via Elekes-Sharir framework}
Here we show that in the case $S\subset \mathbb S^2$, the line $l_{pq}$, that is the point in the Klein quadric, arising from the condition \eqref{rpz}, is indeed the set of $SO(3)$ symmetries taking $\boldsymbol p$ to $\boldsymbol  q$. We use the Clifford algebra representation of $SO(3)$, whose manifold is $\mathbb{FP}^3$.

Traditionally rotations about a point in three dimensions  were represented by unit quaternions. Clifford algebras  generalise quaternions. See \cite{JS} for their applications in kinematics. The appropriate Clifford algebra to use here is $Cl(3, 0)$. The algebra has $3$ generators 
$e_1$, $e_2$ and $e_3$. These generators anti-commute: $e_ie_j = -e_je_i$ if $i\neq j$ and they all square to $1$.

In this algebra points $\boldsymbol p=(p_1,p_2,p_3)$ on the two-sphere can be represented by grade $1$ elements of the form
\begin{equation}\label{spinr}p=p_1e_1 +p_2e_2 +p_3e_3.\end{equation} (We do not use boldface notation for Clifford algebra elements.)  The Clifford conjugate of such an element is given by
$p^- = -p$, so that $$pp^- = -(p_1^2 + p_2^2 + p_3^2),$$ and for points on the two-sphere this will be constant.

The spin group in this Clifford algebra lies in the even sub-algebra. A general element of ${Spin}(3)$ is given as
\begin{equation}\label{spin}
\tilde g = s_0 + s_1 e_2e_3 + s_2e_1e_3 + s_3e_1e_2,\end{equation}
subject to the relation 
$$
\tilde g\tilde g^- = s_0^2 + s_1^2 + s_2^2 + s_3^2 = 1, 
$$
where the Clifford conjugate on a grade 2 element is given by	$(e_i e_j )^-= -e_i e_j $, $i\neq j$.	The quaternion algebra arises by replacing $e_2e_3\to i$, $e_1e_3\to j,$ $e_1e_2\to k$. The group manifold of $Spin(3)$ is thus the three-sphere $\mathbb S^3.$ The action of this group on points $\boldsymbol p \in \mathbb S^2$ is given by conjugation of the corresponding grade $1$ element $p$  in the Clifford algebra, as follows:
$$g\circ p = \tilde g p \tilde g^-.$$
It is easy to see that this action preserves the square of the distance $pp^-$ of the point from the origin.

The group $Spin(3)$ double covers the rotation group $SO(3)$, for $\tilde g$ and $-\tilde g$  give the same rotation about the origin. To avoid this, we can take the parameters $s_0, . . . , s_3$ in \eqref{spin} as homogeneous coordinates in a 3-dimensional projective space
 $\mathbb{FP}^3$, rather than on $\mathbb S^3$. 
This establishes a one-to-one correspondence between elements of $SO(3)$ and points in $\mathbb{FP}^3$. Notation-wise, to make a difference between $SO(3)$ and its double cover $Spin(3)$,  an element of the group $SO(3)$ will be written in the following as 
$$g = s_0 +s_1e_2e_3 +s_2e_3e_1 +s_3e_1e_2,$$  that is the tilde will be dropped, meaning that the parameters $(s_0:s_1:s_2:s_3)$ are now homogeneous coordinates. So $gg^- \in  \mathbb F \setminus {0}.$ The action of the group on $\mathbb S^2$ must also be changed slightly. Rather than (\ref{spinr}) points on $\mathbb S^2$ will be represented by a quadric in homogeneous coordinates. Consider elements of $\mathbb{FP}^3$ given by  $(p_0 : p _1: p_2 : p_3)$. Now represent these points in the Clifford algebra as
$$p=p_0+p_1e_1 +p_2e_2 +p_3e_3.$$
So $$pp^- = p_0^2 - p_1^2 - p_2^2 - p_3^2,$$ and hence $p_0$ has the meaning of the radius of a sphere,
centred at the origin, given that the corresponding elements $p$ in the Clifford algebra satisfy $pp^- = 0.$ If $g \in SO(3)$ then the action of $g$ on the sphere can still be written as 
$$g\circ p = gpg^-.$$

Now, consider the set of elements of $SO(3)$ which transform a point $\boldsymbol p=(p_1,p_2,p_3)$ to a point $\boldsymbol q=(q_1,q_2,q_3)$ on the two-sphere. The Clifford algebra representations $p$, $q$ will satisfy the latter Clifford algebra equation,
 i.e.,
$$
gpg^- = q\qquad \Rightarrow \qquad gp - qg = 0.$$
This gives four linear equations in the quantities $(s_0:s_1:s_2: s_3)$ by equating the coefficients of the basis elements $e_1, e_2, e_3$ and $e_1e_2e_3.$ However, only two of the equations are independent and hence we have a line of solutions. That is the set of $SO(3)$-elements transforming $\boldsymbol  p$ to $\boldsymbol  q$ on $\mathbb S^2$ is a line in $\mathbb{FP}^3.$

This line can be parameterised in several ways. E.g., any rotation that moves $\boldsymbol  p$ to a non-antipodal $\boldsymbol  q$ can be decomposed as a rotation about $\boldsymbol p$ followed by a rotation by the angle $\pi$ about the line in the plane of $\boldsymbol  p$ and $\boldsymbol  q$ which bisects the two vectors. In the Clifford algebra this can be written as,
\begin{equation}\label{lpqpar} g = [(p_1 + q_1)e_2e_3 + (p_2 + q_2)e_3e_1 + (p_3 + q_3)e_1e_2][c + s(p_1e_2e_3 + p_2e_1e_3 + p_3e_1e_2)],
\end{equation}
 where $c$ and $s$ can be thought of as homogeneous parameters or the first column of a $SO(2)$ matrix, that is $c = \cos \theta/2$ and $s = \sin \theta/2$, $\theta$ being the
angle of rotation about $\boldsymbol  p.$
 
Passing from equation (\ref{lpqpar}) to Pl\"ucker coordinates is a short calculation by formula \eqref{Pc}, where the two points on the line one uses for passing to Pl\"ucker coordinates via \eqref{Pc}  can be taken, for instance, as $(c,s)=(1,0)$ and $(0,1)$. The result is precisely \eqref{rpz}, 
where the common factor, 
$$(p_1^2 +p_2^2 +p_3^2)+(p_1q_1 +p_2q_2 +p_3q_3) = (q_1^2 +q^2_2 +q_3^2)+(p_1q_1 +p_2q_2 +p_3q_3)$$
has been cancelled from homogeneous Pl\"ucker coordinates. We leave the special case of antipodal $\boldsymbol  p$ and $\boldsymbol  q$ to the reader.

\hspace{2cm}

\end{document}